\theoremstyle{plain}
\newtheorem{theorem}{Theorem}[section]
\theoremstyle{plain}
\newtheorem{proposition}[theorem]{Proposition}
\newtheorem{lemma}[theorem]{Lemma}
\theoremstyle{remark}
\newtheorem{remark}[theorem]{Remark}
\newtheorem{hypothesis}[theorem]{Hypothesis}
\theoremstyle{definition}
\newtheorem{definition}[theorem]{Definition}
\newtheorem{example}[theorem]{Example}
\newcommand{\dd}{\,\mathrm{d}} 
\newcommand{\ee}{\mathrm{e}}
\newcommand{\RR}{\mathbb{R}} 
\newcommand{\CC}{\mathbb{C}} 
\newcommand{\NN}{\mathbb{N}}
\newcommand{\LLL}{\mathscr{L}}
\newcommand{\Ell}{\mathrm{L}}
\newcommand{\Dir}{\text{D}}
\newcommand{\TT}{\mathbb{T}} 
\newcommand{\calT}{\mathcal{T}}
\newcommand{\calA}{\mathcal{A}}
\newcommand{\calB}{\mathcal{B}}
\newcommand{\calR}{\mathcal{R}}
\newcommand{\calS}{\mathcal{S}}
\newcommand{\vecu}{\boldsymbol{ u}}
\newcommand{\dom}{\mathrm{dom}}
\newcommand{\He}{\mathrm{H}}
\newcommand{\Ce}{\mathrm{C}}
\newcommand{\st}{:}
\newcommand{\downto}{\downarrow}
\newcommand{\trp}{\top}
\DeclareMathOperator{\Id}{I}
\newcommand{\vect}[2]{\tbinom{#1}{#2}}
\newcommand{\hpk}{\tfrac h2}
\newcommand{\tpn}{\tau}
\newcommand{\tpnh}{\tfrac\tau2}
\DeclareMathOperator{\rg}{rg}
\newcommand{\tmax}{t_{\mathrm{max}}}
\newcommand{\Lie}{{\upshape [Lie]}}
\newcommand{\Str}{{\upshape [Str]}}
\newcommand{\wgh}{{\upshape [wgh]}}
\newenvironment{iiv}{\begin{enumerate}[{\rm (i)}]}{\end{enumerate}}
\newenvironment{iivv}{\begin{enumerate}[{\rm (i')}]}{\end{enumerate}}
\newenvironment{abc}{\begin{enumerate}[{\rm (a)}]}{\end{enumerate}}
\begin{document}

\numberwithin{equation}{section}
\allowdisplaybreaks



\title[Operator splitting for abstract dynamical boundary problems]{Operator splitting for abstract Cauchy problems with dynamical boundary conditions}

\author{Petra Csom\'os}

\address{E\"otv\"os Lor\'and University,
Department of Applied Analysis and Com\-pu\-ta\-tion\-al Mathematics and MTA-ELTE Numerical Analysis and Large Networks Research Group, P\'azm\'any P\'eter s\'et\'any 1/C, 1117 Budapest, Hungary}
\email{petra.csomos@ttk.elte.hu}
\author{Matthias Ehrhardt}
\email{ehrhardt@uni-wuppertal.de}
\address{Bergische Universit\"at Wuppertal, Lehrstuhl f\"ur Angewandte Mathematik und Numerische Analysis,  Gau{\ss}strasse 20, 42119 Wuppertal, Germany}

\author{B\'alint Farkas}
\email{farkas@math.uni-wuppertal.de}
\address{Bergische Universit\"at Wuppertal, Lehrstuhl f\"ur Funktionalanalysis, 
Gau{\ss}strasse 20, 42119 Wuppertal, Germany}

\keywords{operator splitting, Lie and Strang splitting, Trotter product, abstract dynamical boundary problems, error bound}
\subjclass{47D06, 47N40, 34G10, 65J08, 65M12, 65M15}

\begin{abstract}
In this work we study operator splitting methods for a certain class of coupled abstract Cauchy problems, where the coupling is such that one of the sub-problems prescribes a ``boundary type'' extra condition for the other one. The theory of one-sided coupled operator matrices provides an excellent framework to study the well-posedness of such problems. We show that with this machinery even operator splitting methods can be treated conveniently and rather efficiently. We consider three specific examples:
the Lie (sequential), the Strang, and the weighted splitting, 
and prove the convergence of these methods along with error bounds under fairly general assumptions. Simple numerical examples show that the obtained theoretical bounds can be computationally realised.
\end{abstract}
\maketitle

\section{Introduction}

Operator splitting procedures provide an efficient way of solving time-dependent differential equations which describe the combined effect of several processes. 
In this case the operator describing the time-evolution is the sum of certain sub-operators corresponding to the different processes.
The main idea of operator splitting is that one solves the sub-problems corresponding to the sub-operators separately,
and constructs the solution of the original problem from the sub-solutions.

Depending on how the sub-solutions define the solution itself, one can distinguish several operator splitting procedures, 
such as sequential (proposed by Bagrinovskii and Godunov in \cite{BagGod57}), Strang (proposed by Strang and Marchuk in \cite{Strang68} and \cite{Marchuk68}), or weighted ones (see e.g.\ in Csomós et al.~\cite{CFH05}).
An application of sequential splitting, for instance, results in the subsequent solution of the sub-problems using the previously obtained sub-solution as initial condition for the next sub-problem.

Although operator splitting procedures enable the numerical treatment of complicated differential equations,
their application leads to an approximate solution which usually differs from the exact one. 
The accuracy can be increased by considering the sequence of the sub-problems on short time intervals in a cycle, which will in turn increase the computational effort. 
However, the analysis of the error, caused by the use of operator splitting, stands in the main focus of related research. 
For general overviews on splitting methods we refer the interested 
reader to the vast literature. For instance, Bj{\o}rhus analysed the consistency of sequential (Lie) splitting in an abstract framework in \cite{Bjorhus98}, Sportisse  considered the stiff case in \cite{Sportisse00}, Hansen and Ostermann also treated the abstract case in \cite{Hansen-Ostermann08}, while Bátkai et al.\ applied the splitting methods for non-autonomous evolution equations in \cite{BCsFN11}. Error bounds in the abstract setting were proved by Jahnke and Lubich in \cite{JahnkeLubich} for the Strang splitting. While Hansen and Ostermann in \cite{HO-high} have treated higher order splitting methods. A survey can be found in \cite{Geiser2011} by Geiser.

Another challenging issue is what kinds of processes of the sub-operators describe. They can e.g.\ correspond to various physical, chemical, biological, financial, etc.\ phenomena. Hundsdorfer and Verwer analysed the splitting of advection--diffusion--re\-act\-ion equations in \cite[Chapter IV]{HunVer03}, Dimov et al.\ solved air pollution transport models in \cite{Dimov-etal08}, Jacobsen et al.\ considered the Hamilton--Jacobi equations in \cite{Jacobsen-etal01}, Holden et al.\ partial differential equations with Burgers nonlinearity in \cite{HLR13}, while in \cite{Csomos-Nickel} Csomós and Nickel and in \cite{BCsF13, BCsF17} Bátkai et al.\ applied splitting methods for delay equations. Splitting methods for Schr\"odinger equations are treated, e.g., in Hochbruck et al.\ \cite{spl-schrod-damp}, Caliari et al.\ \cite{spl-magn-Schrod}. 

The sub-operators can also correspond to the change (derivative) with respect to various spatial coordinates or other variables, as Hansen and Ostermann has studied in \cite{Hansen-Ostermann08}, \cite{dim-spl-quasilin-par}; or for the case of Maxwell equations, see, e.g., Jahnke et al.\ \cite{ADI-Maxwell} or Eilinghoff and Schnaubelt \cite{ADIspl-Maxw}.
Furthermore, the sub-problems may originate from other (mathematical) properties of the problem itself, as in the present case of dynamical boundary problems.

We emphasise that the analysis and the numerical treatment of dynamical boundary problems has been attracting the attention of several researchers recently, cf.\ the work of Hipp \cite{HippDiss, Hipp19} for wave-type equations
or Knopf et al.\ \cite{Knopf20, Knopf19, KnoSig20} on the {C}ahn--{H}illiard equation
or Kovács et al.\ \cite{KLL17} and Kov\'acs, Lubich \cite{Kov17} on parabolic equations. The literature is extensive, and we mention some very recent papers by Altmann \cite{Altmann19}, Epshteyn, Xia \cite{Epshteyn20}, Fukao et al.\ \cite{Fukao17}, Langa, Pierre \cite{Langa19}, and refer to the references therein.

In the present work we focus on the \emph{abstract setting} of coupled Cauchy problems, where one of the subproblems provides an extra condition, of boundary type, to the other. We consider equations of the form:
\begin{equation}\label{eq:main0}
\begin{cases}
\begin{aligned}
 \dot u(t)&=A_m u(t)&&\text{ for } t\ge0,\quad u(0)=u_0\in E,\\
 \dot v(t)&=B v(t)&&\text{ for } t\ge0,\quad v(0)=v_0\in F, \\
	Lu(t)&=v(t) &&\text{ for } t\ge0, &
 \end{aligned}
\end{cases}
\end{equation}
where $E$ and $F$ are Banach spaces over the complex field $\CC$, $A$ and $B$ are (unbounded) linear operators on $E$ and $F$, respectively. 
The coupling of the two problems involves the unbounded linear operator $L$ acting between $E$ and $F$. 
Moreover, this coupling is of ``boundary type'', i.e., as concrete examples we have in mind problems of the following form: 
\begin{align}
	\dot{u}(t)&=\Delta_{\Omega} u(t), \qquad u(0)=u_0\in \Ell^2(\Omega),\label{eq:diff1}\\
	\dot{v}(t)&=\Delta _{\partial\Omega} v(t), \qquad v(0)=v_0\in\Ell^2(\partial\Omega), \label{eq:diff2}\\
	u(t)|_{\partial\Omega}&=v(t),\notag 
\end{align}
where $\Omega$ is a bounded domain in $\RR^d$ with sufficiently regular boundary and $A_m=\Delta_\Omega$, $B=\Delta_{\partial\Omega}$ are the (maximal) distributional Laplace and Laplace--Beltrami operators restricted to the respective $\Ell^2$-space. 
In this example $L$ denotes the trace operator (the precise ingredients will be discussed in Example~\ref{examp:beltrami} below.) 

It is a natural idea for the numerical treatment of \eqref{eq:diff1}--\eqref{eq:diff2} to apply operator splitting methods, i.e.\ 
to treat the first and second equations separately, see also in \cite{Kov17}.
The purpose of this work is to investigate such possibilities, and as a splitting strategy we propose the following steps:

\begin{enumerate}\setlength{\itemsep}{0cm}
 \item Choose a time step $\tau>0$.
	\item Solve the second equation~\eqref{eq:diff2} with the initial condition $v(0)=u_0|_{\partial\Omega}=v_0$, 
	set $v_1:=v(\tau)$.
  \item Solve the first equation~\eqref{eq:diff1}  on $[0,\tau]$  with the inhomogeneous boundary condition $u(t)|_{\partial\Omega}=v_1$ 
and the initial condition $u(0)=\widetilde u_0$. The method determines how $\widetilde u_0$ is calculated from $u_0$ (and $v_0$), and in general $\widetilde u_0$ does not need to equal $u_0$. Set $u_1=u(\tau)$.
\item The new initial condition for equation~\eqref{eq:diff2} is then $u_1|_{\partial\Omega}=v_1$.
\item Iterate this procedure for $n\in\NN$ time steps. 
\end{enumerate}

The aim of this paper is to formulate this splitting method as \emph{an operator splitting} in an abstract \emph{operator semigroup theoretic} framework and investigate its convergence properties. The method then becomes applicable for a wider class of equations than in \eqref{eq:main0}.
Our choice for the auxiliary, modified initial value $\widetilde u_0$, in Step~3 above, is motivated by this approach.
Indeed, the abstract theory will immediately yield the convergence of the method as an instance of the Lie--Trotter formula. 
However, we shall briefly touch upon other possible choices for $\widetilde u_0$ as well.

As a matter of fact our proposed methods, at a first sight, will be slightly different in that we decompose the system in not two but three sub-problems. 
This idea is nicely illustrated in the above example of diffusion: 
We separate the dynamics in the domain and assume homogeneous boundary conditions, the dynamics on the boundary, 
and as the third component the interaction between the two dynamics, i.e., how the boundary dynamics is fed into the domain.
In fact, this decomposition is responsible for the modified form $\widetilde u_0$ of the initial condition.
This approach will also have the advantage that the internal and boundary dynamics are completely separated. Hence well-established methods can be used for solving each of the subproblems.
We also note that the splitting approach here gives a way to parallelisation of the solution to the subproblems. 

This work is organised as follows. 
In Section~\ref{sec:abstractBCs} we recall the necessary operator theoretic background for this programme and in Section~\ref{sec:splittingBCs} we introduce the different splitting approaches for the dynamical boundary conditions: 
the Lie splitting, the Strang splitting and the weighted splitting. 
We also prove the convergence of these methods under fairly general assumptions. 
Section~\ref{sec:order} contains error bounds for the above mentioned splitting methods. Finally, in Section \ref{sec:num} we illustrate the proposed methods by numerical examples, and show that the analytically proved error bounds are realised computationally, too.

\section{Abstract dynamical boundary conditions}\label{sec:abstractBCs}

Before discussing splitting methods in more detail let us briefly recall a possible approach for treating such
abstract dynamical boundary value problems. The abstract treatment of boundary perturbations, i.e., techniques for altering the domain of the generator of a $C_0$-semigroup goes back to the work of Greiner \cite{Greiner}. 
Many results have been building on his theory, and our main sources for describing the abstract 
setting will be the works by Casarino, Engel, Nagel, and Nickel \cite{CENN} and Engel \cite{EngelMatrix, EngelOSC}.
In \cite{CENN} the following set of conditions were posed for treating the well-posedness of the problem~\eqref{eq:main0}.
\begin{hypothesis}\label{hyp:boundary}
	The $\CC$-vector spaces $E$ and $F$ are Banach spaces.
 \begin{iiv}
		\item The operators $A_m:\dom(A_m)\subseteq E\to E$ and $B:\dom(B)\subseteq F\to F$ are linear.
		\item The linear operator $L:\dom(A_m)\to F$ is surjective and bounded with respect to the graph norm of $A_m$ on $\dom(A_m)$.
		\item The restriction $A_0$ of $A_m$ to $\ker(L)$ generates a strongly continuous semigroup $\bigl(T_0(t)\bigr)_{t\ge0}$ on $E$.
		\item The operator $B$ generates a strongly continuous semigroup $\bigl(S(t)\bigr)_{t\ge0}$ on $F$.
  \item The operator (matrix) $\vect{A_m}{L}:\dom(A_m)\to E\times F$ is closed.
	\end{iiv}
\end{hypothesis}

\begin{remark}\label{rem:abstdir}
Consider the following conditions.
 \begin{iivv}
		\item The operator $A_m:\dom(A_m)\subseteq E\to E$ is linear.
		\item The linear operator $L:\dom(A_m)\to F$ is surjective.
		\item $L:\dom(A_m)\to F$ has a bounded right-inverse $R:F\to E$ with $\rg(R)\subseteq \ker(A_m)$.
\item 	The restriction $A_0$ of $A_m$ to $\dom(A_0):=\ker(L)$ is (boundedly) invertible (i.e., $0\in \rho(A_0)$; in general, it is sufficient to assume that the resolvent set $\rho(A_0)$ is non-empty).
	\end{iivv}
Under these assumptions $\vect{A_m}{L}:\dom(A_m)\to E\times F$ is closed. To see this, we first recall from the proof of Lemma 2.2 in \cite{CENN} that in this case
	\[
	\dom(A_m)=\dom(A_0)\oplus \ker(A_m).
	\]
	We also repeat the quick argument for this, taken from \cite{CENN}: For $x\in \dom(A_m)$ we have
	\[
	x=A_0^{-1}A_mx+(x-A_0^{-1}A_mx)\quad \text{with}\quad A_0^{-1}A_m x\in \dom(A_0),\: x-A_0^{-1}A_mx\in \ker(A_m).
	\]
 Furthermore, if $x\in\dom(A_0)\cap \ker(A_m)$, then $A_0x=A_mx=0$, and $x=0$ follows by $0\in\rho(A_0)$.
 
 \medskip\noindent 
	 Now, let $x_n\in \dom(A_m)$ be with $x_n\to x$, $A_mx_n\to y$ in $E$ and $Lx_n\to z$ in $F$ as $n\to \infty$. We need to show $x\in\dom(A_m)$, $A_mx =y$, $Lx=z$. For each $n\in \NN$ write $x_n=x^0_n+x_n^1$ with $x_n^0\in\dom(A_0)$ and $x_n^1\in\ker(A_m)$. Then $A_mx_n=A_mx_n^0+A_mx_n^1=A_mx_n^0=A_0x_n^0$, thus $x_n^0\to A_0^{-1}y$ in $E$ as $n\to \infty$. On the other hand $Lx_n=Lx_n^0+Lx_n^1=Lx_n^1\to z$ in $F$ as $n\to \infty$. It follows that
	 \[
	 RLx_n=RLx_n^1\to Rz\in \dom(A_m).
	 \]
Moreover, from $L(x_n^1-RLx_n^1)=0$	we conclude $x_n^1-RLx_n^1\in\dom(A_0)$ and $A_0(x_n^1-RLx_n^1)=A_mx_n^1-A_mRLx_n^1=0$, so that $x_n^1=RLx_n^1$ follows. This implies $x-A_0^{-1}y=Rz$, $x\in\dom(A_m)$, $A_m x=y$ and $Lx=LRz=z$. 
\end{remark}

In this paper we make the following technical assumption to simplify the things a bit.
\begin{hypothesis}\label{hyp:inv}
	The operators $A_0$ and $B$ are invertible.
\end{hypothesis}
However, let us note that for the splitting procedures this makes \emph{no theoretical difference}, 
since (for semigroup generators) one always finds sufficiently large $\lambda>0$ such that $A_0-\lambda$ and $B-\lambda$ become invertible. 
Then the numerical schemes can be applied in this rescaled situation. 

Next, we recall the following definition from \cite[Lemma 2.2]{CENN}, 
and note that under the previous assumption the following operator is bounded
\begin{align}\label{eq:D}
 D_0 &:= L|_{\ker(A_m)}^{-1}\colon F\to\ker(A_m)\subseteq E.
\end{align}
The operator $D_0$ is called the \emph{abstract Dirichlet operator}; the operator $L|_{\ker(A_m)}$ is indeed invertible, see the mentioned lemma in \cite{CENN}.
 We remark that the existence of this Dirichlet operator $D_0=:R$, a continuous right-inverse to $L$ as in Remark \ref{rem:abstdir}, is therefore equivalent to the closedness of $\vect{A_m}{L}$ (under the assumption that $0\in \rho(A_0)$).
\begin{remark}\label{rem:D0bdd}
	\begin{abc}
		\item The operator $D_0B:\dom(B)\to E$ is bounded if $\dom(B)$ is supplied with the graph-norm $\|\cdot\|_B$.
		\item We have $\rg(D_0)\cap \dom(A_0)=\{0\}$.
	\end{abc}
	\end{remark}

Following \cite{CENN} we introduce the product space $E\times F$ and the operator $\calA$ acting on it as
\begin{equation}\label{eq:calA}
 \calA:=\begin{pmatrix}A_m&0\\0& B\end{pmatrix}
  \;\text{with}\;
  \dom(\calA):=\big\{\vect{x}{y}\in\dom(A_m)\times\dom(B):Lx=y\big\}.
\end{equation}
Section~1.1 in \cite{MugnoloDiss} relates the well-posedness of \eqref{eq:main0} to the 
generation property of $\calA$, see also \cite{Mugnolo}. 

The first thing to be settled is therefore, 
whether the abstract Cauchy problem
\begin{equation*}
 \dot{\vecu}(t)=\calA u(t),\quad\text{for}\; t\ge0,
 \qquad\vecu(0)=\vecu_0=(u_0,v_0)^\trp,
\end{equation*}
is well-posed in the sense of $C_0$-semigroups, 
see \cite[Section II.6]{Engel-Nagel}.
In this case the solution satisfies $\vecu(t)=\calT(t)\vecu_0$, where $(\calT(t))_{t\ge 0}$ is the semigroup generated by $\calA$.
The problem of well-posedness is solved in \cite{CENN}. 
We briefly recall here the following results from Theorem 2.7 in \cite{CENN} and from its proof.

\begin{theorem}\label{thm:Engel}
Let the operators $\calA$, $D_0$ be as defined in \eqref{eq:calA} and \eqref{eq:D} and assume Hypotheses~\ref{hyp:boundary} and \ref{hyp:inv}. 
For $y\in\dom(B)$ define
 \begin{equation}\label{eq:Qt}
  Q(t)y=D_0S(t)y-T_0(t)D_0y - \int_0^t T_0(t-s)D_0S(s)By\dd s.
 \end{equation}
Operator $\calA$ is the generator of a $C_0$-semigroup 
if and only if for each $t\ge 0$ the operator (extends to)
 \begin{equation}\label{eq:Qtb}
  Q(t)\in\LLL(F,E) \quad\text{and}\quad \limsup_{t\downto 0}\|Q(t)\|<\infty.
 \end{equation}
 In this case the semigroup $\bigl(\calT(t)\bigr)_{t\ge0}$ generated by $\calA$ is given by 
 \begin{equation}\label{eq:calT}
  \calT(t)=\begin{pmatrix}T_0(t)& Q(t)\\0 & S(t)\end{pmatrix}.
 \end{equation}
\end{theorem}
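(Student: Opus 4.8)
The plan is to realise $\calA$ as a perturbation of the ``decoupled'' generator and to use the explicit candidate formula \eqref{eq:calT} to identify the semigroup. First I would introduce the block-diagonal operator
\[
  \calA_0:=\begin{pmatrix}A_0&0\\0&B\end{pmatrix},\qquad
  \dom(\calA_0)=\dom(A_0)\times\dom(B),
\]
which, by Hypothesis~\ref{hyp:boundary}(iii)--(iv), generates the $C_0$-semigroup $\bigl(\mathrm{diag}(T_0(t),S(t))\bigr)_{t\ge0}$ on $E\times F$. The decomposition $\dom(A_m)=\dom(A_0)\oplus\ker(A_m)$ recalled in Remark~\ref{rem:abstdir}, together with the Dirichlet operator $D_0$ from \eqref{eq:D}, lets me write any $\vect{x}{y}\in\dom(\calA)$ as $\vect{x-D_0y}{y}+\vect{D_0y}{0}$ with the first summand in $\dom(\calA_0)$; this is the standard Greiner-type setup. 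The idea is then that $\calA$ is a one-sided coupled matrix whose generator property is governed precisely by the family $Q(t)$ in \eqref{eq:Qt}, which is the solution of an inhomogeneous boundary-value problem.

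Next I would verify that the operator family $\calT(t)$ in \eqref{eq:calT} is, whenever \eqref{eq:Qtb} holds, a $C_0$-semigroup with generator $\calA$. Strong continuity and the bound near $t=0$ follow from strong continuity of $T_0$, $S$ and the assumption $\limsup_{t\downarrow0}\|Q(t)\|<\infty$ (plus the uniform boundedness principle to get local boundedness of $Q(\cdot)$ on compacts, using the integral representation and the semigroup bounds). For the semigroup law $\calT(t+s)=\calT(t)\calT(s)$ the only nontrivial block is the top-right entry, which demands the cocycle identity
\[
  Q(t+s)=T_0(t)Q(s)+Q(t)S(s).
\]
I would check this by substituting \eqref{eq:Qt}, expanding $S(t+s)=S(t)S(s)$, $T_0(t+s)=T_0(t)T_0(s)$, splitting the integral over $[0,t+s]$ at $s$, and changing variables; the boundedness of $D_0B$ on $(\dom(B),\|\cdot\|_B)$ from Remark~\ref{rem:D0bdd}(a) makes all the manipulations with $D_0S(r)By$ legitimate, and density of $\dom(B)$ in $F$ extends the identity to all of $F$. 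Computing the generator of $\calT$ on the core $\dom(\calA_0)\oplus(\rg(D_0)\times\{0\})$ and matching it with $\calA$ via the variation-of-parameters form of $Q$ — differentiating \eqref{eq:Qt} at $t=0$ yields exactly the coupling constraint $Lx=y$ built into $\dom(\calA)$ — completes this direction.

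For the converse, assuming $\calA$ generates $\bigl(\calT(t)\bigr)_{t\ge0}$, I would show the semigroup must have the upper-triangular shape \eqref{eq:calT} with $T_0$, $S$ in the diagonal: the subspace $E\times\{0\}$ and the quotient behaviour on $\{0\}\times F$ are invariant in the appropriate sense because of the one-sided coupling ($L$ appears only in the domain, the off-diagonal operator entry is $0$), so the diagonal blocks are forced. Then, applying $\calT(t)$ to initial data of the form $\vect{D_0y}{y}$ and using that $t\mapsto\calT(t)\vecu_0$ solves $\dot{\vecu}=\calA\vecu$, the top component must solve $\dot u=A_m u$ with $Lu(t)=S(t)y$ and $u(0)=D_0y$; solving this inhomogeneous abstract boundary problem by the Greiner formula gives precisely $u(t)=T_0(t)D_0y+Q(t)y$, hence $Q(t)=\calT(t)_{12}\in\LLL(F,E)$ and the $\limsup$ bound is inherited from local boundedness of $\calT$.

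\medskip\noindent
The main obstacle I anticipate is the rigorous handling of the top-right block: proving the cocycle identity for $Q(t)$ and, even before that, making sense of $Q(t)$ as a \emph{bounded} operator on all of $F$ (it is a priori only defined on $\dom(B)$ via \eqref{eq:Qt}). Both hinge on controlling the term $\int_0^tT_0(t-s)D_0S(s)By\dd s$ — rewriting it, via an integration by parts in $s$ using $\frac{d}{ds}S(s)y=S(s)By$ and $\frac{d}{ds}T_0(t-s)=-A_0T_0(t-s)$, into a form not involving $B$ explicitly — so that the whole expression extends continuously from $\dom(B)$ to $F$; this is exactly the content hidden in the hypothesis \eqref{eq:Qtb} and is the technical heart of Theorem 2.7 in \cite{CENN}, which I would invoke rather than fully redo.
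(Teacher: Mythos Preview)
The paper does not actually prove this theorem: it is stated as a recollection ``from Theorem~2.7 in \cite{CENN} and from its proof,'' with no argument given. So there is no in-paper proof to compare against, and your closing sentence --- deferring the technical heart to \cite{CENN} --- already matches exactly what the paper does.

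Your outline is nonetheless a legitimate direct proof strategy, but it differs from the route that \cite{CENN} (and hence implicitly the paper) takes. Immediately after the theorem the paper singles out as the ``decisive tool'' the similarity transformation
\[
\calA=\calR_0^{-1}\calA_0\calR_0,\qquad
\calR_0=\begin{pmatrix}I&-D_0\\0&I\end{pmatrix},\qquad
\calA_0=\begin{pmatrix}A_0&-D_0B\\0&B\end{pmatrix},
\]
which turns the formally diagonal $\calA$ with coupled domain into an upper-triangular operator with the \emph{diagonal} domain $\dom(A_0)\times\dom(B)$. The generator question then becomes one about a Miyadera--Voigt-type perturbation of $\mathrm{diag}(A_0,B)$ by the off-diagonal entry $-D_0B$, and the condition \eqref{eq:Qtb} drops out of the resulting variation-of-constants formula. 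Your approach instead verifies the semigroup law for $\calT(t)$ head-on via the cocycle identity for $Q$ and then identifies the generator on a proposed core; this is workable, but the step ``computing the generator on the core $\dom(\calA_0)\oplus(\rg(D_0)\times\{0\})$'' needs more justification (why is this a core for the generator of $\calT$?), and it is precisely the place where the similarity trick in \cite{CENN} buys you a cleaner argument. A minor point: your use of the symbol $\calA_0$ for the block-diagonal operator clashes with the paper's later definition of $\calA_0$ as the upper-triangular conjugate.
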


The next condition will be important throughout the paper.
\begin{hypothesis}\label{hyp:sec}
	The operator $A_0$ generates a boun\-ded analytic semigroup (see \cite{Lunardi, Haase} for details about analytic semigroups). 
	\end{hypothesis}
If Hypotheses~\ref{hyp:boundary}, \ref{hyp:inv} and \ref{hyp:sec} are fulfilled and also $B$ is a generator of an analytic semigroup, then Theorem~\ref{thm:Engel} applies and assures that the semigroup $(\calT(t))_{t\ge0}$ generated by $\calA$ is analytic too, see \cite[Corollary 2.8]{CENN}.


The motivating example from the introduction is discussed in \cite[Section 3]{CENN} in detail. We recall here the ingredients, to illustrate that our proposed methods will be applicable also for this equation.

\begin{example}[Laplace and Laplace--Beltrami operators]
	\label{examp:beltrami}
Let $\Omega$ be a bounded domain in $\RR^d$ with boundary $\partial\Omega$ of class $\Ce^2$.
	\begin{itemize}
		\item $E:=\Ell^2(\Omega)$, $F:=\Ell^2(\partial\Omega)$ are the $\Ell^2$-spaces with respect to the Lebesgue and the surface measure, respectively.
		\item $\Delta_\Omega$ and $\Delta_{\partial\Omega}$ are the (maximal) distributional Laplace and Laplace--Beltrami operators, respectively.
		\item $A_m:=\Delta_\Omega$ with domain 
\begin{equation*}
		\dom(A_m):=\{f\st f\in \He^{1/2}(\Omega)\text{ with }\Delta_\Omega f\in \Ell^2(\Omega)\}.
		\end{equation*}
		\item $Lf=f|_{\partial \Omega}$ the trace of $f\in \dom(A_m)$ on $\partial\Omega$.
	\item $B=\Delta_{\partial\Omega}$ with domain 
	\begin{equation*}
	\dom(B)=\{g\st g\in \Ell^2(\partial\Omega)\text{ with }\Delta_{\partial\Omega} g\in \Ell^2(\Omega\}.
	\end{equation*}
\end{itemize}
 Hypotheses \ref{hyp:boundary}, \ref{hyp:inv}, \ref{hyp:sec} are satisfied for these choices. In particular, $\calA$ generates an analytic semigroup on $E\times F$, see \cite[Section 3]{CENN}. We also have the following:
 \begin{itemize}
	 \item The Dirichlet operator $D_0:\Ell^2(\partial\Omega)\to 
 \He^{1/2}(\Omega)$ assigns to a prescribed boundary value $g$ a function $f$ with $f|_{\partial \Omega}=g$ (in the sense of traces) and $\Delta_{\Omega}f=0$.
 \item $A_0= \Delta_{\Dir}$ is the Laplace operator with (homogeneous) Dirichlet boundary condition, generating the Dirichlet heat semigroup $(T_0(t))_{t\geq 0}$ on $\Ell^2(\Omega)$.
 \item The semigroup $(S(t))_{t\geq 0}$ is the heat semigroup on $\Ell^2(\partial\Omega)$.
\end{itemize}
	\end{example}

\begin{example}[Bounded Lipschitz domains]\label{examp:Lip}
In this example we indicate that one can relax the smoothness condition on the boundary of the domain from Example \ref{examp:beltrami}.
Let $\Omega\subseteq \RR^d$ be a bounded domain with Lipschitz boundary $\partial\Omega$.
\begin{abc} 
\item Consider the following operators:
	\begin{itemize}
		\item $A_m=\Delta_\Omega$ with domain 
	\begin{equation*}
		\dom(A_m):=\{f\st f\in \He^{1/2}(\Omega)\text{ with }\Delta_\Omega f\in \Ell^2(\Omega)\}.
		\end{equation*}
		\item $Lf=f|_{\partial \Omega}$ the trace of $f\in \dom(A_m)$ on $\partial\Omega$ (see, e.g., \cite[pp.{} 89--106]{McL00}).
\end{itemize}
Then $L$ is surjective and actually has a bounded right-inverse
		\[
		R:\Ell^2(\partial \Omega)\to \ker(A_m),
		\] where $\dom(A_m)$ is endowed with the norm $u\mapsto \|u\|_{\He^{1/2}}+\|\Delta u\|_2$, see Theorem 3.6 (i) in \cite{BeGeMi20} for precisely this statement (or \cite[Lemma 3.1, Theorem 5.3]{GeMit11}, \cite{GeMit08}, \cite[Theorem 3.37]{McL00}).
The restriction $A_0$ of $A_m$ to 
\[
\ker(L)=\{f\st \He^{1/2}(\Omega),\:\Delta_\Omega f\in \Ell^2(\Omega),\: Lf=0\}
\]		
is strictly positive, self-adjoint, in particular $A_0$ is invertible and generates a bound\-ed analytic semigroup, \cite[Theorem 5.1]{GeMit11}, \cite[Theorem 2.11]{GeMit08} (see also \cite[Theorem 3.6 (v)]{BeGeMi20}). By invoking Remark \ref{rem:abstdir} we obtain that $\vect{A_m}{L}:\dom(A_m)\to \Ell^2(\Omega)\times \Ell^2(\partial\Omega)$ is closed, and altogether that $A_m$ and $L$ satisfy the relevant conditions from Hypothesis \ref{hyp:boundary}. 
\item One can also consider the Laplace--Beltrami operator $B:=\Delta_{\partial\Omega}$ on $\Ell^2(\partial \Omega)$, which (with an appropriate domain) is also a strictly positive, self-adjoint operator, see \cite[Theorem 2.5]{GeMit11} or \cite{GeMiMiMi} for details.
\end{abc}
Summing up, we see that the abstract framework of \cite{CENN}, hence of this paper, covers also some interesting cases of dynamical boundary value problems on bounded Lipschitz domains.
\end{example}

The decisive tool, based on the theory of coupled operator matrices \cite{EngelMatrix, EngelOSC}, 
is to bring the \emph{formally diagonal} operator $\calA$ with a \emph{non-diagonal domain} 
into an upper triangular form with the state space transformations
 \begin{equation*}
 \calR_0=\begin{pmatrix}I&-D_0\\0& I\end{pmatrix},\qquad \calR_0^{-1}=\begin{pmatrix}I&D_0\\0& I\end{pmatrix}.
 \end{equation*}
Accordingly, we obtain the following representation: 
\begin{equation}\label{eq:long}
	\calA = \mathcal R_0^{-1}\calA_0\mathcal R_0,
\end{equation}
where 
\begin{equation*}
	\calA_0 = \begin{pmatrix}A_0&-D_0B\\0&B\end{pmatrix}\quad \text{with}\quad\dom(\calA_0)=\dom(A_0)\times \dom(B),
\end{equation*}
see \cite[Lemma 2.6 and the proof of Corollary 2.8]{CENN}. 
	
\section{Operator splitting methods for dynamical boundary conditions problems}\label{sec:splittingBCs}
Since the form of the semigroup $(\calT(t))_{t\ge 0}$ can be rarely determined in practice, our aim is to determine an approximation to it,
and denote at time $t=k\tau$ the approximation of $\vecu(k\tau)$ 
by $\vecu_k(\tau)$ for all $k\in\NN$. 
The natural requirement is that the approximate value should converge 
to the exact one when refining the temporal resolution (letting $\tau\to 0$). We recall the following definition from \cite{Lax56} due to Lax and Richtmyer.

\begin{definition}[Convergence]
The approximation $\vecu_k$ is called convergent to the solution $\vecu$ of problem~\eqref{eq:main0} on $[0,\tmax]$ (for  given $\tmax>0$) if
 $\vecu(t)=\lim\limits_{n\to\infty}\vecu_n(\tfrac tn)$ holds uniformly for all $t\in[0,\tmax]$.
\end{definition}

Starting from the representation~\eqref{eq:long}, 
we construct approximations of the form
\begin{equation}\label{eq:nummethod}
	\vecu_k(\tau):=\mathcal R_0^{-1}\TT(\tau)^k\mathcal R_0\tbinom{u_0}{v_0},
\end{equation}
where the operator $\TT(\tau)\colon E\times\dom(B)\to E\times\dom(B)$,
$\tau\ge0$ 
describes the actual numerical method, and $\vecu(0)=\vecu_0=(u_0,v_0)^\top$. 
In order to specify the operator $\TT(\tau)$, we remark that the operator $\calA_0$ can be written as the sum
\begin{equation*}
	\calA_0=:\calA_1+\calA_2+\calA_3,
\end{equation*}
where
\begin{equation*}
 \calA_1 = \begin{pmatrix}A_0&0\\0&0\end{pmatrix},
	\quad 
	\calA_2 = \begin{pmatrix}0&-D_0B\\0&0\end{pmatrix}, 
	\quad 
	\calA_3 = \begin{pmatrix}0&0\\0&B\end{pmatrix}, 
	\end{equation*}
	with
	\begin{equation*}
	\dom(\calA_1)=\dom(A_0)\times F, \;
	\dom(\calA_2)=E\times\dom(B), \;
	\dom(\calA_3)=E\times\dom(B).
\end{equation*}
We point out that $\calA_1$ and $\calA_3$ commute (in the sense of resolvents).
From Hypothesis~\ref{hyp:boundary} and Remark~\ref{rem:D0bdd} we immediately obtain the following
proposition.

\begin{proposition} 
		The operator semigroups $(\calT_i(t))_{t\ge 0}$, $i=1,2,3$ given by
\begin{equation*}
	\calT_1(t) = \begin{pmatrix}T_0(t)&0\\0&I\end{pmatrix}, \quad \calT_2(t) = \begin{pmatrix}I&-tD_0B\\0&I\end{pmatrix}, \quad \calT_3(t) = \begin{pmatrix}I&0\\0&S(t)\end{pmatrix}
\end{equation*}
are strongly continuous on $E\times \dom(B)$ with generator 
\begin{equation*}
\text{$\calA_1|_{E\times \dom(B)}$, $\calA_2$ and $\calA_3|_{E\times \dom(B)}$, respectively.}
\end{equation*}
Here we consider the parts of the respective operators in the space $E\times \dom(B)$.
The semigroups $(\calT_1(t))_{t\ge 0}$ and $(\calT_3(t))_{t\ge 0}$ are even strongly continuous on $E\times F$. 
Their generators are $\calA_1$ and $\calA_3$, respectively.
\end{proposition}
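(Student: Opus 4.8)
The plan is to check, separately for each $i\in\{1,2,3\}$ and on the space indicated, the three defining features of a strongly continuous semigroup with prescribed generator: the algebraic semigroup law, strong continuity, and the identification of the generator via difference quotients. At the outset I would record that $E\times\dom(B)$, equipped with $\|x\|_E+\|y\|_B$ where $\|\cdot\|_B$ is the graph norm of $B$, is a Banach space, since $B$, being a generator, is closed.

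The semigroup law $\calT_i(t+s)=\calT_i(t)\calT_i(s)$, $\calT_i(0)=\Id$, is immediate for $i=1,3$ from the corresponding properties of $(T_0(t))_{t\ge0}$ and $(S(t))_{t\ge0}$, as these act in a single diagonal slot; for $i=1$ it is also clear that $\calT_1(t)$ maps $E\times\dom(B)$ into itself (the second coordinate is untouched), and for $i=3$ one uses that $S(t)$ commutes with $B$, hence leaves $\dom(B)$ and $\dom(B^2)$ invariant. For $i=2$ the key observation is that the formal operator matrix $\calA_2$ satisfies $\calA_2^2=0$; together with Remark~\ref{rem:D0bdd}(a), which yields $\calA_2\in\LLL(E\times\dom(B))$, this means $(\calT_2(t))_{t\ge0}=(\Id+t\calA_2)_{t\ge0}$ is exactly the uniformly continuous semigroup generated by the bounded operator $\calA_2$, with $\|\calT_2(t)-\Id\|_{\LLL(E\times\dom(B))}\le t\,\|\calA_2\|\to0$ as $t\downto0$; so for $i=2$ there is essentially nothing left to prove.

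For strong continuity of $\calT_1$ on $E\times\dom(B)$ (resp.\ on $E\times F$) one only needs $\|T_0(t)x-x\|_E\to0$, i.e.\ the $C_0$-property of $(T_0(t))_{t\ge0}$. For $\calT_3$ on $E\times\dom(B)$ one estimates, using $BS(t)=S(t)B$ on $\dom(B)$,
\[
\bigl\|\calT_3(t)\vect xy-\vect xy\bigr\|_{E\times\dom(B)}=\|S(t)y-y\|_F+\|S(t)By-By\|_F,
\]
which tends to $0$ as $t\downto0$ by the $C_0$-property of $(S(t))_{t\ge0}$ applied to $y$ and to $By\in F$; on $E\times F$ only the first summand appears. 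To identify the generators I would pass to difference quotients. For $\calT_1$ on $E\times\dom(B)$, $\tfrac1t\bigl(\calT_1(t)-\Id\bigr)\vect xy=\vect{t^{-1}(T_0(t)x-x)}{0}$ converges in $E\times\dom(B)$ precisely when $x\in\dom(A_0)$, with limit $\vect{A_0x}0=\calA_1\vect xy$; since this limit automatically lies in $E\times\dom(B)$, the domain $\dom(A_0)\times\dom(B)$ so obtained is exactly that of the part $\calA_1|_{E\times\dom(B)}$, while on $E\times F$ the same computation gives the generator $\calA_1$ with domain $\dom(A_0)\times F=\dom(\calA_1)$. For $\calT_3$ on $E\times\dom(B)$, $\tfrac1t\bigl(\calT_3(t)-\Id\bigr)\vect xy=\vect0{t^{-1}(S(t)y-y)}$ converges in the graph norm of $B$ iff $t^{-1}(S(t)y-y)\to By$ in $F$ and $t^{-1}(S(t)By-By)\to B^2y$ in $F$, i.e.\ iff $y\in\dom(B^2)$, giving the generator $\vect xy\mapsto\vect0{By}$ on $E\times\dom(B^2)=\dom(\calA_3|_{E\times\dom(B)})$; on $E\times F$ the condition reduces to $y\in\dom(B)$, so the generator is $\calA_3$ with domain $E\times\dom(B)=\dom(\calA_3)$.

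I expect no real obstacle here; the proof is bookkeeping, and the two points that merely deserve care are: first, the reason one is forced to state the common framework on $E\times\dom(B)$ rather than on $E\times F$ — namely that $D_0B$ is bounded only out of the graph norm, so $\calT_2(t)$ does not act boundedly on $E\times F$; and second, matching the domain produced by the difference-quotient limit with the abstract definition of the part of an operator in an invariant subspace, which in each of these cases is painless because the limiting vector already belongs to the subspace.
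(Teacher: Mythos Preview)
Your proposal is correct. The paper does not give a detailed proof of this proposition at all --- it simply states that the result follows ``immediately'' from Hypothesis~\ref{hyp:boundary} and Remark~\ref{rem:D0bdd} --- so your argument spells out precisely the routine checks (semigroup law, strong continuity, generator identification via difference quotients, with the key input $\calA_2\in\LLL(E\times\dom(B))$ and $\calA_2^2=0$) that the paper leaves implicit.
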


In this work we focus on methods~\eqref{eq:nummethod} with the following choices for the operator $\TT(\tau)$:
\begin{align}
 \TT^\text{\Lie}(\tau)&:=\calT_1(\tau)\calT_2(\tau)\calT_3(\tau)	\label{eq:lie} \qquad \intertext{for the \emph{Lie (or sequential) splitting};}
	\TT^\text{\Str}(\tau)&:=\calT_1(\tfrac\tau 2)\calT_3(\tfrac\tau 2)\calT_2(\tau)\calT_3(\tfrac\tau 2)\calT_1(\tfrac\tau 2) \label{eq:str} 
	\qquad \intertext{for the \emph{Strang (or symmetrical) splitting};}
 \TT^\text{\wgh}(\tau)&:=\Theta\calT_1(\tau)\calT_2(\tau)\calT_3(\tau)+(1-\Theta)\calT_3(\tau)\calT_2(\tau)\calT_1(\tau) 	\label{eq:wgh}
\end{align}
for the \emph{weighted splitting}, where the parameter $\Theta\in[0,1]$ is fixed.
 We note that the case $\Theta=1$ corresponds to the Lie splitting, 
 while $\Theta=0$ gives the Lie splitting in the reverse order.
Computing the composition of the operators leads to the common form
\begin{equation}\label{eq:TT}
	\TT(\tau)=\begin{pmatrix}T_0(\tau)&V(\tau)\\0&S(\tau)\end{pmatrix}
\end{equation}
with the operators
\begin{align}
	 \text{Lie splitting:}\quad V^\text{\Lie}(\tau) &= -\tau T_0(\tau)D_0BS(\tau), \label{eq:Vlie}\\
	 \text{Strang splitting:}\quad V^\text{\Str}(\tau) &= -\tau T_0(\tfrac\tau 2)D_0BS(\tfrac\tau 2),\label{eq:Vstr}\\
	 \text{weighted splitting:}\quad V^\text{\wgh}(\tau) &= -\tau (\Theta T_0(\tau)D_0BS(\tau)+(1-\Theta)D_0B) 	\label{eq:Vwgh}
\end{align}
for all $\tau >0$. The approximation \eqref{eq:nummethod} requires the powers of the operator $\TT(\tau)$ to be computed next.

\begin{proposition}\label{prop:Tk}
For the operator family $\TT(\tau)\colon E\times\dom(B)\to E\times \dom(B)$, $\tau >0$, from \eqref{eq:TT} we have the identity
	\begin{align}
\nonumber		\TT(\tau)^k &= \begin{pmatrix}T_0(k\tau)&V_k(\tau)\\0&S(k\tau)\end{pmatrix} 
\intertext{with} 
\label{eq:Vk}	V_k(\tau) &= \sum\limits_{j=0}^{k-1}T_0\bigl((k-1-j)\tau\bigr)V(\tau)S(j\tau).
	\end{align}
\end{proposition}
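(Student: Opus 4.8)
The plan is to prove the formula by induction on $k$, exploiting the block upper-triangular structure in \eqref{eq:TT}. The base case $k=1$ is immediate: the sum in \eqref{eq:Vk} reduces to the single term $j=0$, giving $V_1(\tau)=T_0(0)V(\tau)S(0)=V(\tau)$, which matches \eqref{eq:TT}. For the inductive step, assume the claim holds for some $k\ge 1$ and compute $\TT(\tau)^{k+1}=\TT(\tau)^k\cdot\TT(\tau)$ by block matrix multiplication:
\[
\begin{pmatrix}T_0(k\tau)&V_k(\tau)\\0&S(k\tau)\end{pmatrix}\begin{pmatrix}T_0(\tau)&V(\tau)\\0&S(\tau)\end{pmatrix}=\begin{pmatrix}T_0(k\tau)T_0(\tau)&T_0(k\tau)V(\tau)+V_k(\tau)S(\tau)\\0&S(k\tau)S(\tau)\end{pmatrix}.
\]
The diagonal entries become $T_0((k+1)\tau)$ and $S((k+1)\tau)$ by the semigroup property of $(T_0(t))_{t\ge0}$ and $(S(t))_{t\ge0}$. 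For the off-diagonal entry, I substitute the inductive hypothesis for $V_k(\tau)$ and use the semigroup law $T_0((k-1-j)\tau)T_0(\tau)=T_0((k-j)\tau)$ inside the sum:
\[
T_0(k\tau)V(\tau)+\sum_{j=0}^{k-1}T_0\bigl((k-j)\tau\bigr)V(\tau)S(j\tau)S(\tau).
\]
Reindexing the sum (or simply observing that the extra term $T_0(k\tau)V(\tau)$ is exactly the $j=0$ summand of the desired expression for $V_{k+1}$, namely $T_0((k+1-1-0)\tau)V(\tau)S(0)$, while the sum supplies the terms $j=1,\dots,k$ after the shift) yields $V_{k+1}(\tau)=\sum_{j=0}^{k}T_0\bigl((k-j)\tau\bigr)V(\tau)S(j\tau)$, which is precisely \eqref{eq:Vk} with $k$ replaced by $k+1$. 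This closes the induction.

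Two bookkeeping points deserve attention. First, one must check that all compositions make sense as maps $E\times\dom(B)\to E\times\dom(B)$: here $V(\tau)$ maps $\dom(B)$ into $E$ (each of the three variants in \eqref{eq:Vlie}--\eqref{eq:Vwgh} factors through $D_0B$ or $D_0BS(\cdot)$, which is bounded from $\dom(B)$ with the graph norm by Remark~\ref{rem:D0bdd}(a)), and $S(j\tau)$ maps $\dom(B)$ into $\dom(B)$, so every summand $T_0((k-1-j)\tau)V(\tau)S(j\tau)$ is a bounded operator from $\dom(B)$ to $E$; the diagonal blocks are evidently bounded on the respective spaces. Second, the semigroup identities $T_0(s)T_0(t)=T_0(s+t)$ and $S(s)S(t)=S(s+t)$ are used freely, which is legitimate since $(T_0(t))_{t\ge0}$ and $(S(t))_{t\ge0}$ are $C_0$-semigroups by Hypothesis~\ref{hyp:boundary}.

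I do not anticipate any real obstacle here — the statement is a routine consequence of the triangular form \eqref{eq:TT} and the induction is mechanical. The only mild subtlety is the index shift in the sum, which is easy to get wrong by one; the cleanest way to present it is to split off the $j=0$ term of the target sum $V_{k+1}$ (which equals $T_0(k\tau)V(\tau)$) and match the remaining $k$ terms against the reindexed inductive sum. Everything else is formal block-matrix arithmetic together with the semigroup property.
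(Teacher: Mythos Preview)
Your approach is exactly the paper's: induction on $k$, block multiplication of the triangular matrix \eqref{eq:TT}, and a reindexing of the off-diagonal sum. There is, however, an index slip in your displayed formula: from $\TT(\tau)^k\TT(\tau)$ the upper-right entry is $T_0(k\tau)V(\tau)+V_k(\tau)S(\tau)$, and substituting the inductive hypothesis gives
\[
T_0(k\tau)V(\tau)+\sum_{j=0}^{k-1}T_0\bigl((k-1-j)\tau\bigr)V(\tau)S(j\tau)S(\tau),
\]
with no extra $T_0(\tau)$ factor present; your invocation of $T_0((k-1-j)\tau)T_0(\tau)=T_0((k-j)\tau)$ is therefore spurious, and the $(k-j)$ in your display should read $(k-1-j)$. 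The correct step (and the one the paper carries out) uses the $S$-semigroup law $S(j\tau)S(\tau)=S((j+1)\tau)$ together with the shift $j\mapsto j-1$, after which the loose term $T_0(k\tau)V(\tau)$ is precisely the $j=0$ summand of $V_{k+1}$, as you say in your parenthetical. With that correction the argument is complete and identical to the paper's.
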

\begin{proof}
We show the assertion by induction. 
For $k=1$ we have formula~\eqref{eq:TT} with $V_1(\tau)=V(\tau)$. 
If the assertion is valid for some $k\ge 1$, then
\begin{equation*}
\begin{split}
	\TT(\tau)^{k+1} &=  
	 \begin{pmatrix}T_0(k\tau)&V_k(\tau)\\0&S(k\tau)\end{pmatrix}
	 \begin{pmatrix}T_0(\tau)&V(\tau)\\0&S(\tau)\end{pmatrix}
	=\begin{pmatrix}T_0\bigl((k+1)\tau\bigr)&V_{k+1}(\tau)\\
	    0&S\bigl((k+1)\tau\bigr)\end{pmatrix}
	\intertext{holds with}
	V_{k+1}(\tau)&=T_0(k\tau)V(\tau)+V_k(\tau)S(\tau)\\& = T_0(k\tau)V(\tau)+\sum_{j=0}^{k-1}T_0\bigl((k-1-j)\tau\bigr)V(\tau)S(j\tau)S(\tau) \\
	&= T_0(k\tau)V(\tau)+\sum_{j=1}^k T_0\bigl((k-j)\tau\bigr)V(\tau)S\bigl((j-1)\tau\bigr)S(\tau)\\
	&= \sum\limits_{j=0}^k T_0\bigl((k-j)\tau\bigr)V(\tau)S(j\tau).
\end{split}
\end{equation*}
This proves the assertion for all $k\in\NN$ by induction.
\end{proof}

The convergence of the approximation relies on the following result. 
\begin{proposition}\label{prop:Vn}
Under Hypotheses \ref{hyp:boundary}, \ref{hyp:inv}, \eqref{eq:Qtb} and with the notation in~\eqref{eq:TT}, the approximation~\eqref{eq:nummethod} is convergent for $y\in\dom(B)$ if the condition
	\begin{equation}\label{eq:Vlim}
		\lim\limits_{n\to\infty}V_n(\tfrac tn)y
		= -\int_0^t T_0(t-s)D_0S(s)By\,\dd s
	\end{equation}
	holds uniformly for $t$ in compact intervals.
\end{proposition}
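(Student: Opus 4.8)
The plan is to reduce the convergence of the scheme \eqref{eq:nummethod} to the single operator limit \eqref{eq:Vlim} by unwinding the similarity transformation $\calR_0$ and comparing term by term with the explicit form \eqref{eq:calT} of the exact semigroup from Theorem~\ref{thm:Engel}.

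First I would make \eqref{eq:nummethod} explicit. Applying $\calR_0$ to the initial datum gives $\calR_0\binom{u_0}{v_0}=\binom{u_0-D_0v_0}{v_0}$, which lies in $E\times\dom(B)$ exactly because $v_0\in\dom(B)$ (and $D_0$ maps boundedly into $E$); then Proposition~\ref{prop:Tk}, applied with $k=n$ and $\tau=t/n$ so that $n\tau=t$, yields
\begin{equation*}
  \TT(\tfrac tn)^n\binom{u_0-D_0v_0}{v_0}=\binom{T_0(t)(u_0-D_0v_0)+V_n(\tfrac tn)v_0}{S(t)v_0},
\end{equation*}
and a final multiplication by $\calR_0^{-1}$ gives
\begin{equation*}
  \vecu_n(\tfrac tn)=\binom{T_0(t)u_0-T_0(t)D_0v_0+V_n(\tfrac tn)v_0+D_0S(t)v_0}{S(t)v_0}.
\end{equation*}
The key observation is that the only $n$-dependent term here is $V_n(\tfrac tn)v_0$; everything else is an exact building block.

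Next I would write the exact solution $\vecu(t)=\calT(t)\binom{u_0}{v_0}=\binom{T_0(t)u_0+Q(t)v_0}{S(t)v_0}$ using \eqref{eq:calT}, with $Q(t)$ as in \eqref{eq:Qt}. Subtracting, the second components cancel identically, while in the first component the summands $-T_0(t)D_0v_0$ and $D_0S(t)v_0$ occurring in $\vecu_n(\tfrac tn)$ are precisely the first two terms of $Q(t)v_0$; hence
\begin{equation*}
  \vecu_n(\tfrac tn)-\vecu(t)=\binom{V_n(\tfrac tn)v_0+\int_0^t T_0(t-s)D_0S(s)Bv_0\dd s}{0}.
\end{equation*}
By hypothesis \eqref{eq:Vlim} (with $y=v_0\in\dom(B)$) the first component tends to $0$ in $E$, uniformly for $t$ in compact intervals, in particular on $[0,\tmax]$; since the second component vanishes for every $n$, this is exactly the asserted convergence $\vecu_n(\tfrac tn)\to\vecu(t)$ uniformly on $[0,\tmax]$ in the norm of $E\times F$.

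I do not foresee a genuine obstacle: once Proposition~\ref{prop:Tk} is available the argument is essentially bookkeeping. The two points that deserve a word of care are, first, that $\TT(\tfrac tn)$ maps $E\times\dom(B)$ into itself so that its powers are well defined --- this follows from the boundedness of $D_0B$ on $(\dom(B),\|\cdot\|_B)$ in Remark~\ref{rem:D0bdd}\,(a) together with the invariance of $\dom(B)$ under $(S(t))_{t\ge0}$ --- and, second, that the manipulations with $Q(t)$ are legitimate for $v_0\in\dom(B)$, where each term of \eqref{eq:Qt} is defined directly and no density extension is needed. The uniformity in $t$ is simply inherited from the uniform convergence assumed in \eqref{eq:Vlim}.
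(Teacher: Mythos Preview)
Your proof is correct and follows essentially the same approach as the paper: you unwind the similarity $\calR_0^{-1}\TT(\tfrac tn)^n\calR_0$ using Proposition~\ref{prop:Tk}, compare componentwise with the explicit form \eqref{eq:calT}--\eqref{eq:Qt} of $\calT(t)$, and observe that the discrepancy reduces exactly to $V_n(\tfrac tn)v_0+\int_0^t T_0(t-s)D_0S(s)Bv_0\dd s$. The paper does the same computation at the level of the $2\times 2$ matrix product rather than applying it to a vector, but the content is identical.
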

\begin{proof}
From Proposition~\ref{prop:Tk}, the approximation has the form
\begin{equation}\label{eq:nummethod2}
\begin{aligned}
	\vecu_k(\tau)&=\begin{pmatrix}I&D_0\\0& I\end{pmatrix}\begin{pmatrix}T_0(k\tau)&V_k(\tau)\\0&S(k\tau)\end{pmatrix}\begin{pmatrix}I&-D_0\\0& I\end{pmatrix}\vecu_0 \\
	&= \begin{pmatrix}T_0(k\tau)&V_k(\tau)-T_0(k\tau)D_0+D_0S(k\tau)\\0&S(k\tau)\end{pmatrix}\vecu_0.
	\end{aligned}
\end{equation}
By comparing with formula \eqref{eq:calT} and using the relation \eqref{eq:Qt}, condition \eqref{eq:Vlim} implies the assertion.
\end{proof}

The convergence of the Riemann sums implies our next result concerning the approximation of the convolution in \eqref{eq:Vlim}.
\begin{lemma}\label{lem:Cn}
 Let $\tmax\ge 0$, let $f\colon[0,\tmax]\to\LLL(F,E)$ be strongly continuous, and let $g\colon [0,\tmax]\to F$ be continuous. For each $n\in\NN$ and $t\in [0,\tmax]$ define the following expressions
 \begin{align*}
  C_n^{[1]}(t) &:= \tfrac tn\sum_{j=0}^{n-1}f\bigl((n-j)\tfrac tn\bigr)g(j\tfrac tn), \\
  C_n^{[2]}(t) &:= \tfrac tn\sum_{j=0}^{n-1}f\bigl((n-j-\tfrac 12)\tfrac tn\bigr)\,g\bigl((j+\tfrac 12)\tfrac tn\bigr).
 \end{align*}
Then for $j=1,2$ we have that 
 \begin{equation*}
  \lim\limits_{n\to\infty}C_n^{[j]}(t) = \int_0^t f(t-s)\,g(s)\dd s
 \end{equation*}
 holds uniformly for $t\in[0,\tmax]$.
\end{lemma}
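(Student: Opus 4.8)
The plan is to recognise $C_n^{[1]}(t)$ and $C_n^{[2]}(t)$ as the left-endpoint and the midpoint Riemann sums, respectively, of the $E$-valued function $s\mapsto\Phi(t,s):=f(t-s)g(s)$ over the interval $[0,t]$, and to estimate the quadrature error by a modulus of continuity that does not depend on $t$.

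First I would record two elementary facts. Since $f$ is strongly continuous on the compact interval $[0,\tmax]$, each orbit $r\mapsto f(r)x$ is bounded, so the Banach--Steinhaus theorem yields $M:=\sup_{r\in[0,\tmax]}\|f(r)\|_{\LLL(F,E)}<\infty$; moreover $K:=g([0,\tmax])$ is a compact subset of $F$ and $g$ is uniformly continuous.

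The crucial --- and only delicate --- step is to show that $\Phi$ is uniformly continuous on the compact triangle $\Delta:=\{(t,s):0\le s\le t\le\tmax\}$; here one has to be careful, because $f$ is merely strongly, not norm, continuous. I would first observe that $(r,x)\mapsto f(r)x$ is \emph{jointly} continuous on $[0,\tmax]\times F$, since $\|f(r)x-f(r_0)x_0\|\le M\|x-x_0\|+\|(f(r)-f(r_0))x_0\|$ and the last term tends to $0$ by strong continuity. Restricting this map to the \emph{compact} set $[0,\tmax]\times K$ makes it uniformly continuous there; combining this with the estimate $\|\Phi(t,s)-\Phi(t',s')\|\le M\|g(s)-g(s')\|+\|(f(t-s)-f(t'-s'))g(s')\|$, the inclusion $g(s')\in K$, and $|(t-s)-(t'-s')|\le|t-t'|+|s-s'|$, produces a modulus of continuity $\omega$ for $\Phi$ on $\Delta$, i.e.\ $\|\Phi(p)-\Phi(q)\|\le\omega(|p-q|)$ with $\omega(\rho)\to0$ as $\rho\to0$. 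In particular $s\mapsto\Phi(t,s)$ is continuous, so the Bochner integral $\int_0^tf(t-s)g(s)\dd s$ on the right-hand side is well defined.

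Finally I would carry out the quadrature estimate. With $s_\ell:=\ell t/n$ for $0\le\ell\le n$ one has $C_n^{[1]}(t)=\sum_{\ell=0}^{n-1}(s_{\ell+1}-s_\ell)\Phi(t,s_\ell)$ and $C_n^{[2]}(t)=\sum_{\ell=0}^{n-1}(s_{\ell+1}-s_\ell)\Phi\bigl(t,\tfrac{s_\ell+s_{\ell+1}}2\bigr)$, while $\int_0^tf(t-s)g(s)\dd s=\sum_{\ell=0}^{n-1}\int_{s_\ell}^{s_{\ell+1}}\Phi(t,s)\dd s$. In each subinterval $[s_\ell,s_{\ell+1}]$ the relevant sampling point lies within distance $t/n\le\tmax/n$ of every point of that subinterval, so subtracting the representations and using $\|\int\cdot\|\le\int\|\cdot\|$ gives, for $j=1,2$,
\[
\Bigl\|C_n^{[j]}(t)-\int_0^tf(t-s)g(s)\dd s\Bigr\|\le\sum_{\ell=0}^{n-1}(s_{\ell+1}-s_\ell)\,\omega(\tmax/n)=t\,\omega(\tmax/n)\le\tmax\,\omega(\tmax/n),
\]
which tends to $0$ as $n\to\infty$ uniformly in $t\in[0,\tmax]$ (the case $\tmax=0$, or $t=0$, being trivial). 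This proves the claim. The one point requiring genuine attention is precisely the uniform continuity of $\Phi$ under the sole hypothesis of strong continuity of $f$, which is why the compactness of the range of $g$ is brought into play.
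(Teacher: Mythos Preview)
Your argument is correct. The paper does not give a proof of this lemma at all; it simply introduces it with the sentence ``The convergence of the Riemann sums implies our next result\ldots'' and states it without further justification. Your write-up supplies precisely the details behind that remark: uniform boundedness of $f$ via Banach--Steinhaus, joint continuity of $(r,x)\mapsto f(r)x$ restricted to the compact set $[0,\tmax]\times g([0,\tmax])$, and then the standard quadrature estimate with a $t$-independent modulus of continuity. Your identification of $C_n^{[1]}$ and $C_n^{[2]}$ as the left-endpoint and midpoint Riemann sums of $s\mapsto f(t-s)g(s)$ is exactly right, and your care in handling the fact that $f$ is only strongly continuous (by exploiting the compactness of $\rg(g)$) is the one genuinely nontrivial point---which the paper tacitly assumes the reader will supply.
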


We can now state the main result of this section concerning convergent approximations of the solution to problem \eqref{eq:main0}.
\begin{proposition}
Under Hypotheses \ref{hyp:boundary}, \ref{hyp:inv}, and \eqref{eq:Qtb} 
the approximations defined in \eqref{eq:lie}, \eqref{eq:str}, and \eqref{eq:wgh} are convergent for all $\vecu_0\in E\times \dom(B)$.
\end{proposition}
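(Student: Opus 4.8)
The plan is to verify, for each of the three splittings, that the operator $V_n(\tfrac tn)y$ from Proposition~\ref{prop:Tk} satisfies the convergence condition~\eqref{eq:Vlim}, since Proposition~\ref{prop:Vn} then immediately yields convergence of the corresponding approximation~\eqref{eq:nummethod}. By the general formula~\eqref{eq:Vk}, with $\tau=\tfrac tn$ and $V(\tau)$ given by the appropriate one of \eqref{eq:Vlie}, \eqref{eq:Vstr}, \eqref{eq:Vwgh}, this amounts to identifying a Riemann-type sum that converges to the integral $-\int_0^t T_0(t-s)D_0 S(s) By\,\dd s$, and the tool for this is Lemma~\ref{lem:Cn} applied with $f(r):=T_0(r)D_0$ (strongly continuous by strong continuity of $(T_0(r))_{r\ge0}$ and boundedness of $D_0$, using \eqref{eq:D}) and $g(r):=S(r)By$ (continuous for $y\in\dom(B)$, since $B$ commutes with $S$ and $S$ is strongly continuous).

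First I would treat the Lie splitting. Plugging $V^{\Lie}(\tau)=-\tau T_0(\tau)D_0 B S(\tau)$ into \eqref{eq:Vk} gives
\[
V^{\Lie}_n(\tfrac tn)y = -\tfrac tn\sum_{j=0}^{n-1} T_0\bigl((n-1-j)\tfrac tn\bigr)\,T_0(\tfrac tn)D_0\,BS(\tfrac tn)\,S(j\tfrac tn)y
= -\tfrac tn\sum_{j=0}^{n-1} T_0\bigl((n-j)\tfrac tn\bigr)D_0\,S\bigl((j+1)\tfrac tn\bigr)By,
\]
using the semigroup laws and that $B$ commutes with $S$. After reindexing this is, up to an $O(\tfrac 1n)$ endpoint correction, exactly $-C_n^{[1]}(t)$ for the above $f,g$; alternatively one reindexes to match $C_n^{[1]}$ directly and absorbs the shift by uniform continuity of $g$ on $[0,\tmax]$. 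Lemma~\ref{lem:Cn} then gives \eqref{eq:Vlim} uniformly on compacts, as required. The Strang case is similar: $V^{\Str}(\tau)=-\tau T_0(\tfrac\tau2)D_0 B S(\tfrac\tau2)$ inserted into \eqref{eq:Vk} produces, after the semigroup laws, a sum of the form $-C_n^{[2]}(t)$ for the same $f,g$ (the half-step shifts $n-j-\tfrac12$ and $j+\tfrac12$ arise precisely from the symmetric splitting), and Lemma~\ref{lem:Cn} again applies.

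The weighted case requires a short extra argument because $V^{\wgh}(\tau)=-\tau(\Theta T_0(\tau)D_0 BS(\tau)+(1-\Theta)D_0 B)$ is a convex combination: by linearity of \eqref{eq:Vk} in $V(\tau)$ we split $V^{\wgh}_n(\tfrac tn)y = \Theta\,V^{\Lie}_n(\tfrac tn)y + (1-\Theta)\,W_n(\tfrac tn)y$, where $W_n$ comes from the term $-\tau D_0 B$. The first term converges to $\Theta$ times the target integral by the Lie case; for the second, $W_n(\tfrac tn)y = -\tfrac tn\sum_{j=0}^{n-1} T_0\bigl((n-1-j)\tfrac tn\bigr)D_0 S(j\tfrac tn)By$, which is a Riemann sum of the same type (left endpoints, no shift) and hence converges to the target integral by Lemma~\ref{lem:Cn}; together this gives $\Theta + (1-\Theta) = 1$ times the integral, i.e.\ \eqref{eq:Vlim}. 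Since $\vecu_0\in E\times\dom(B)$ is exactly the hypothesis of Proposition~\ref{prop:Vn}, convergence follows for all three methods. The only mildly delicate point is bookkeeping the index shifts and half-steps so that the sums land exactly in the forms $C_n^{[1]}$ or $C_n^{[2]}$ of Lemma~\ref{lem:Cn}, controlling the leftover boundary terms by the uniform continuity of $s\mapsto S(s)By$ on $[0,\tmax]$ together with the uniform boundedness of $(T_0(r))_{r\in[0,\tmax]}$ and of $D_0$; there is no genuine analytic obstacle beyond this.
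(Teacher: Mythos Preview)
Your proof is correct and follows essentially the same route as the paper: reduce to condition~\eqref{eq:Vlim} via Proposition~\ref{prop:Vn}, compute $V_n(\tfrac tn)y$ explicitly for each splitting using the semigroup laws and the commutation of $B$ with $S$, and identify the resulting sums as (shifted variants of) the Riemann sums $C_n^{[1]}$ or $C_n^{[2]}$ from Lemma~\ref{lem:Cn}. Your explicit bookkeeping of the index shifts and endpoint corrections is in fact slightly more careful than the paper, which simply invokes Lemma~\ref{lem:Cn} directly for each case.
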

\begin{proof}
It suffices to prove that condition \eqref{eq:Vlim} holds for the operators $V(\tau)$ defined in \eqref{eq:Vlie}, \eqref{eq:Vstr} and \eqref{eq:Vwgh}.
By Proposition~\ref{prop:Tk}, we have the following identity for the Lie splitting:
\begin{align*}
	V_k^\text{\Lie}(\tau)y &= -\tau\sum\limits_{j=0}^{k-1} T_0\bigl((k-1-j)\tau\bigr)T_0(\tau)D_0BS(\tau)S(j\tau)y \\
	&= -\tau\sum\limits_{j=0}^{k-1} T_0\bigl((k-j)\tau\bigr)D_0BS\bigl((j+1)\tau\bigr)y,
\end{align*}
for the Strang splitting:
\begin{align}
	\label{eq:StrV}V_k^\text{\Str}(\tau)y &= -\tau \sum\limits_{j=0}^{k-1} T_0\bigl((k-1-j)\tau\bigr)T_0(\tfrac\tau 2)D_0BS(\tfrac\tau 2)S(j\tau)y \\
	\notag &= -\tau \sum\limits_{j=0}^{k-1} T_0\bigl((k-j-\tfrac 12)\tau\bigr)D_0BS\bigl((j+\tfrac 12)\tau\bigl)y,
\end{align}
and for the weighted splitting:
\begin{align*}
 V_k^\text{\wgh}(\tau)y &= -\tau\sum\limits_{j=0}^{k-1} T_0\bigl((k-1-j)\tau\bigr)
 \big(\Theta T_0(\tau)D_0BS(\tau)+(1-\Theta)D_0B\big) S(j\tau)y \\
 &= -\Theta\tau\sum\limits_{j=0}^{k-1}T_0\bigl((k-j)\tau\bigr)D_0BS\bigl((j+1)\tau\bigr)y \\
 &\quad-(1-\Theta)\tau\sum\limits_{j=0}^{k-1}T_0\bigl((k-j-1)\tau\bigr)D_0BS(j\tau)y
\end{align*}
for all $y\in\dom(B)$, $\tau >0$, and $\Theta\in[0,1]$. 
Since $B$ and the semigroup operators $S(t)$ commute on $\dom(B)$, we have
\begin{align*}
	V_n^\text{\Lie}(\tfrac tn)y &= -\tau \sum\limits_{j=0}^{n-1}T_0((n-j)\tfrac tn)D_0S((j+1)\tfrac tn)By, \\
	V_n^\text{\Str}(\tfrac tn)y &= -\tau \sum\limits_{j=0}^{n-1}T_0((n-j-\tfrac 12)\tfrac tn)D_0S((j+\tfrac 12)\tfrac tn)By, \\
	V_n^\text{\wgh}(\tfrac tn)y &= -\Theta\tau\sum\limits_{j=0}^{n-1}T_0((n-j)\tfrac tn)D_0S((j+1)\tfrac tn)By \\
	\nonumber &\quad -(1-\Theta)\tau\sum\limits_{j=0}^{n-1}T_0((n-j-1)\tfrac tn)D_0S(j\tfrac tn)By.
\end{align*}
Now, Lemma~\ref{lem:Cn} yields the convergence to the convolution in \eqref{eq:Vlim} for each of these cases. 
\end{proof}

\begin{remark}
The stability of splitting methods for triangular operator matrices has been studied in \cite{BCsEF}. If we write
\begin{equation*}
 \calA_0 = \begin{pmatrix}A_0&0\\0&B\end{pmatrix}+\begin{pmatrix}0&-D_0B\\0&0\end{pmatrix}=\calB+\calA_2,
\end{equation*}
then $\calB$ with $\dom(\calB)=E\times \dom(B^2)$ generates the strongly continuous semigroup
\begin{equation*}
	\calS(t) = \begin{pmatrix}T_0(t)&0\\0&S(t)\end{pmatrix}
	\end{equation*}
on $E\times \dom(B)$. Since $\calA_2$ is bounded on this space, by \cite[Prop.{} 2.4]{BCsEF} we obtain that for some $M\ge 0$ and $\omega \in \RR$
\begin{equation*}
 \bigl\|\bigl(\calS(\tfrac tn)\calT_2(\tfrac tn)\bigr)^n\bigr\|_{\LLL(E\times \dom(B))}\le M\ee ^{\omega }\quad \text{for every $t\ge 0$.}
\end{equation*}
Thus we immediately obtain the convergence of the corresponding Lie splitting procedure on $E\times \dom (B)$ by the Lie--Trotter product formula, see \cite[Section III.5]{Engel-Nagel}, or \cite{Trotter59}. 
As a matter of fact, in this way we obtain also the generator property of $\calA_0$ on $E\times \dom (B)$ directly, without recurring to \cite{CENN}. 
\end{remark}

\begin{remark}\label{rem:V*rem}
Let us comment on the relation between the previously proposed Lie splitting and the one from the introduction.
Given $u_0\in \He^{1/2}(\Omega)$ such that $v_0=u_0|_{\partial\Omega}$ belongs to $\dom (B)=\He^2(\partial \Omega)$, we have 
that the Lie splitting corresponds to the choices $v_1=S(\tau)v_0\in \dom(B)$ and
\begin{align*}
	\widetilde u_0&=u_0-D_0 v_0+D_0 v_1-\tau D_0 Bv_1=u_0+D_0\Bigl(\int_0^\tau S(r)Bv_0\, \dd r -\tau D_0 Bv_1\Bigr)\\
	&= u_0+D_0\int_0^\tau \bigl( S(r)-S(\tau)\bigr)Bv_0\,\dd r.
\end{align*}
If $v_0\in\dom(B^2)$, 
we obtain $\widetilde{u}_0=u_0+\mathrm{O}(\tau^2)$, where $\mathrm{O}(\tau^2)$ denotes a term with norm less than or equal to $\tau^2 C \|B^2v_0\|$. 

It can be proved that if a method (more precisely the choice of $\widetilde{u}_0$) satisfies $\widetilde{u}_0=u_0+\mathrm{O}(\tau^2)$, then the corresponding splitting method (e.g.\ the one in the introduction with $\widetilde u_0=u_0$) is convergent. 
In addition, its convergence order is the same as for the Lie splitting, cf.{} the next section.
\end{remark}

\section{Order of convergence}\label{sec:convergenceLie}
\label{sec:order}
In this section we will investigate the order of convergence of the proposed splitting schemes.
We begin with recalling a standard definition, see, e.g., \cite{Atkin09}.

\begin{definition}[Order of convergence]
The approximation $\vecu_n$ to $\vecu$ is called \textit{convergent of order} $p>0$ on $[0,\tmax]$ (for some fixed $\tmax>0$) if there exists a constant $C\ge0$ such that $\|\vecu(t)-\vecu_n(\tfrac tn)\|\le C n^{-p}$ for every $t\in[0,\tmax]$ and $n\in\NN\setminus\{0\}$. 
\end{definition}
 
 The rest of this paper is devoted to the investigation of such estimates for the approximations given in \eqref{eq:nummethod}. 
\begin{remark}
Jahnke and Lubich \cite{JahnkeLubich} studied the convergence order of the Strang splitting for
generators of bounded analytic semigroups under certain commutator conditions (for the Lie splitting, see \cite[Chapter 10]{ISEM2012}). If we split
\begin{equation*}
 \calA_0 = \begin{pmatrix}A_0&0\\0&B\end{pmatrix}+\begin{pmatrix}0&-D_0B\\0&0\end{pmatrix}=\calB+\calA_2,
\end{equation*}
and assume that $A_0$, $B$ are generators of bounded analytic semigroups, then in order to apply their result we need to calculate the commutator $[\calB,\calA_2]$.
We have
\begin{align*} 
 [\calB,\calA_2]&=\begin{pmatrix}A_0&0\\0&B\end{pmatrix}\begin{pmatrix}0&-D_0B\\0&0\end{pmatrix}-\begin{pmatrix}0&-D_0B\\0&0\end{pmatrix}\begin{pmatrix}A_0&0\\0&B\end{pmatrix}\\
 &=\begin{pmatrix}0&-A_0D_0B\\0&0\end{pmatrix}-\begin{pmatrix}0&D_0B^2\\0&0\end{pmatrix}=-\begin{pmatrix}0&D_0B^2\\0&0\end{pmatrix}
\end{align*}
with the domain 
\begin{equation*}
 \dom([\calB,\calA_2])=\dom(A_0)\times \{0\},
\end{equation*} 
by Remark~\ref{rem:D0bdd}. 
This renders the direct application of the Jahnke--Lubich result impossible. 
Moreover, in contrast to \cite{JahnkeLubich} we do not need to require that the operator $B$ is also an analytic generator, only the well-posedness of~\eqref{eq:main0} (or equivalently \eqref{eq:Qtb}). 
The price to be paid for this simplification is the requirement of 
increased regularity conditions for the initial value.
\end{remark}

Before proceeding to the error estimates we start 
with an important observation, whose proof is a small modification 
of the one of Lemma~\ref{prop:Vn}, cf.\ \eqref{eq:nummethod2}.
\begin{proposition}
	Let $V(\tau)$ be as in \eqref{eq:TT} and let 
	$D\subseteq F$ be a subspace with a given norm $\|\cdot\|_D$. Let $r\ge 0$, let $\tmax>0$ and $C\ge 0$ such that for every $y \in D$ and for every $t\in [0,\tmax]$
	\begin{equation}\label{eq:orderVn}
\Bigl\|V_n(\tfrac tn)y +\int_{0}^{t} T_0(t-s)D_0 S(s)By\dd s\Bigr\|\le \frac{Ct^r\log(n)}{n^r}\|y\|_D.
	\end{equation}
 Then 
	\begin{equation*}
	\Bigl\|\calR_0^{-1}\TT^n(\tfrac tn)\calR_0\tbinom{x}{y}-\calT(t)\tbinom{x}{y}\Bigr\|\le \frac{Ct^r\log(n)}{n^r}\|y\|_D
	\end{equation*}
	for every $x\in E$, $y\in D$ and $t\in [0,\tmax]$.
	In particular, the approximation $\vecu_k$ defined in \eqref{eq:nummethod} is convergent of order $p$ for any $p\in (0,r)$ and every initial value $\vecu_0\in E\times D$.
\end{proposition}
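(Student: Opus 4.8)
The plan is to mimic the block computation leading to \eqref{eq:nummethod2} in the proof of Proposition~\ref{prop:Vn}, only keeping the remainder term quantitative. First I would set $\tau=\tfrac tn$ in the formula of Proposition~\ref{prop:Tk} (so that $n\tau=t$) and conjugate by $\calR_0$ exactly as in \eqref{eq:nummethod2}, obtaining the identity
\[
\calR_0^{-1}\TT^n(\tfrac tn)\calR_0=\begin{pmatrix}T_0(t)&V_n(\tfrac tn)-T_0(t)D_0+D_0S(t)\\0&S(t)\end{pmatrix}.
\]
Subtracting the representation \eqref{eq:calT} of $\calT(t)$ and inserting the formula \eqref{eq:Qt} for $Q(t)$, the $(1,1)$-, $(2,1)$- and $(2,2)$-entries cancel identically, while the $(1,2)$-entry collapses, when applied to $y$, to $V_n(\tfrac tn)y+\int_0^tT_0(t-s)D_0S(s)By\dd s$. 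Hence
\[
\Bigl(\calR_0^{-1}\TT^n(\tfrac tn)\calR_0-\calT(t)\Bigr)\tbinom{x}{y}=\tbinom{V_n(\tfrac tn)y+\int_0^tT_0(t-s)D_0S(s)By\dd s}{0},
\]
and, since the $E\times F$-norm of this vector is the $E$-norm of its first component, the first displayed inequality of the proposition is precisely hypothesis \eqref{eq:orderVn}.

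For the final clause I would use that well-posedness (guaranteed by \eqref{eq:Qtb}) gives $\vecu(t)=\calT(t)\vecu_0$ and, by \eqref{eq:nummethod}, $\vecu_n(\tfrac tn)=\calR_0^{-1}\TT^n(\tfrac tn)\calR_0\vecu_0$, so the bound just proved reads $\|\vecu(t)-\vecu_n(\tfrac tn)\|\le Ct^r\log(n)\,n^{-r}\|v_0\|_D$ for all $t\in[0,\tmax]$. Since $t^r\le\tmax^r$ there and, for any fixed $p\in(0,r)$, $c_{p,r}:=\sup_{n\ge1}n^{p-r}\log n<\infty$ (a logarithm is dominated by any positive power), we arrive at $\|\vecu(t)-\vecu_n(\tfrac tn)\|\le\bigl(Cc_{p,r}\tmax^r\|v_0\|_D\bigr)n^{-p}$, which is convergence of order $p$. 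Every estimate above was already uniform in $t\in[0,\tmax]$, so the required uniformity is automatic; note that when $r=0$ the claim about order $p\in(0,r)$ is vacuous.

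I do not expect a genuine obstacle here: the whole argument is the block bookkeeping of Proposition~\ref{prop:Vn} plus an elementary growth comparison. The only points needing a little care are that the operator expressions must be meaningful — this is the tacit standing assumption $D\subseteq\dom(B)$, so that $By$ and the operators appearing in $V_n(\cdot)y$ act on $y$, and $s\mapsto T_0(t-s)D_0S(s)By$ is a continuous $E$-valued map (using boundedness of $D_0$, Remark~\ref{rem:D0bdd}, and that $S(\cdot)$ and $B$ commute on $\dom(B)$), so the convolution is a well-defined Bochner integral — and that the constant $c_{p,r}$, though finite for each fixed $p<r$, blows up as $p\uparrow r$; this is harmless since $p$ is fixed, and it is exactly why one cannot in general reach order $r$ itself.
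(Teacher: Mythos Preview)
Your proof is correct and follows exactly the approach the paper indicates: the paper does not spell out a proof but states it is ``a small modification of the one of Lemma~\ref{prop:Vn}, cf.~\eqref{eq:nummethod2}'', and you have carried out precisely that block computation together with the elementary bound $\log(n)n^{-r}\le c_{p,r}n^{-p}$ for $p<r$. Your side remarks (that $D\subseteq\dom(B)$ is implicitly needed, and that the order-$p$ assertion is vacuous when $r=0$) are accurate and do not affect the argument.
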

From now on we will focus on the error estimates concerning the approximation $V_n(\tfrac tn)$, where the corresponding $V$ is either given in \eqref{eq:Vlie}, or \eqref{eq:Vstr} or \eqref{eq:Vwgh} (but note that many other choices for $V$ are possible, cf.\ Remark~\ref{rem:V*rem}.)

\begin{lemma}[Local error of splittings I]\label{lem:loclieA}
	Let $A_0$ and $B$ be the generator of the strongly continuous semigroups $(T_0(t))_{t\ge0}$ and $(S(t))_{t\ge0}$, respectively, and 
	suppose Hypotheses \ref{hyp:boundary}, \ref{hyp:inv}, \eqref{eq:Qtb}, \ref{hyp:sec}.
For every $\tmax>0$ there is $C\ge 0$ such that for every $h\in[0,\tmax]$, 
for every $s_0,s_1\in [0,h]$, and for every $y\in \dom(B^2)$ we have
\begin{align*}
  \Bigl\|\int_{0}^{h} T_0(h-s)A_0^{-1}D_0 S(s)By\dd s-hT_0(h-s_0)A_0^{-1}D_0 S(s_1)By\Bigr\|
  \le C h^{2}(\|By\|+\|B^2 y\|).
\end{align*} 
\end{lemma}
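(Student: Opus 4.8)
The plan is to write the integrand as the value of a vector-valued function at $s$ and compare it to the value used in the quadrature rule, exploiting the smoothness of that function. Define $G(s):=T_0(h-s)A_0^{-1}D_0S(s)By$ for $s\in[0,h]$ and $y\in\dom(B^2)$. The desired quantity is $\int_0^h G(s)\dd s - h\,G^{*}$, where $G^{*}:=T_0(h-s_0)A_0^{-1}D_0S(s_1)By$ differs from $G(s_0)$ (or $G(s_1)$) only in that the two semigroup arguments are ``split'' at $s_0$ and $s_1$ rather than being equal. I would first reduce to the standard midpoint/endpoint quadrature situation: writing $\int_0^h G(s)\dd s - h\,G(s_0) = \int_0^h\bigl(G(s)-G(s_0)\bigr)\dd s$ and then $G(s)-G(s_0)=\int_{s_0}^s G'(r)\dd r$, so that the whole expression is controlled by $h\sup_{r\in[0,h]}\|G'(r)\|$ plus a term measuring the mismatch $\|G(s_0)-G^{*}\|$. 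The key is therefore a uniform bound $\sup_{r\in[0,h]}\|G'(r)\|\le C(\|By\|+\|B^2y\|)$, together with a similar bound for the off-diagonal correction.

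For the derivative, $G'(r) = -A_0T_0(h-r)A_0^{-1}D_0S(r)By + T_0(h-r)A_0^{-1}D_0S(r)B^2y$ on a suitable dense set, and both terms must be estimated. The second term is immediate: $A_0^{-1}D_0\in\LLL(F,E)$ is bounded (Hypothesis~\ref{hyp:inv} gives $D_0$ bounded, $A_0$ invertible), $S(r)$ is uniformly bounded on $[0,\tmax]$, so this term is $\le C\|B^2y\|$. For the first term the crucial point is that we may write $A_0T_0(h-r)A_0^{-1}D_0 = A_0T_0(h-r)\,(A_0^{-1}D_0)$ and use analyticity of $(T_0(t))_{t\ge0}$ from Hypothesis~\ref{hyp:sec}: $\|A_0T_0(\sigma)\|\le C/\sigma$ for $\sigma>0$. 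This gives a bound $\le C(h-r)^{-1}\|D_0\|\,\|S(r)By\|\le C(h-r)^{-1}\|By\|$, which is integrable in $r$ over $[0,h]$ with integral $\le C h\log$-type\,$\cdot\|By\|$ --- and in fact, after integrating $\int_0^h\!\!\int_{s_0}^s G'(r)\dd r\dd s$ one gets $\int_0^h (h-r)^{-1}\cdot(\text{length}\le h)\dd r$, wait --- more carefully: $\bigl\|\int_0^h\!\int_{s_0}^s G'(r)\dd r\dd s\bigr\|\le \int_0^h |s-s_0|\cdot\bigl(C(h-r_*)^{-1}\|By\|+C\|B^2y\|\bigr)$ is too crude; instead I would bound $\bigl\|\int_0^h(G(s)-G(s_0))\dd s\bigr\|\le \int_0^h\int_{\min}^{\max}\|G'(r)\|\dd r\dd s$ and split the $r$-integral, using $\int_0^h h\cdot C(h-r)^{-1}\dd r$ --- still divergent. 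The clean fix is the alternative representation $A_0T_0(h-r)A_0^{-1}D_0Bv = T_0(h-r)D_0Bv - \int_0^{h-r}A_0T_0(\sigma)D_0Bv\,\dd\sigma\cdot$(no) --- rather, use $A_0^{-1}D_0 B$ bounded (Remark~\ref{rem:D0bdd}(a): $D_0B$ is bounded from $\dom(B)$ with graph norm, hence $A_0^{-1}D_0B$ maps $\dom(B)\to\dom(A_0)$ boundedly in the $\|\cdot\|_B$ norm), so $G'(r)=-T_0(h-r)\,(A_0 A_0^{-1}D_0)S(r)By+\dots$; since $A_0A_0^{-1}D_0=D_0$ is bounded into $E$ (not into $\dom(A_0)$!), actually $A_0T_0(h-r)A_0^{-1}D_0 = T_0(h-r)D_0$ only formally --- the honest statement is $A_0 T_0(h-r)(A_0^{-1}D_0)Bv$: write $A_0^{-1}D_0Bv\in E$, apply $A_0 T_0(h-r)$, and use that $T_0(h-r)$ maps into $\dom(A_0)$ with $\|A_0T_0(h-r)w\|\le C(h-r)^{-1}\|w\|$. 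So $\|G'(r)\|\le C(h-r)^{-1}\|A_0^{-1}D_0B\|_{\LLL(\dom B,E)}\|y\|_B + C\|B^2y\|$, and then $\int_0^h\|G(s)-G(s_0)\|\dd s\le \int_0^h\bigl|\int_{s_0}^s\|G'(r)\|\dd r\bigr|\dd s$; the singular part contributes $\int_0^h\bigl|\int_{s_0}^s C(h-r)^{-1}\dd r\bigr|\dd s$. Since $\bigl|\int_{s_0}^s(h-r)^{-1}\dd r\bigr|=|\log(h-s_0)-\log(h-s)|$, whose $\dd s$-integral over $[0,h]$ is $O(h)$ (the $\log$ singularity at $s=h$ is integrable, giving $\int_0^h|\log(h-s)|\dd s=O(h\,|\log h|)$, actually $=h(1+|\log h|)$, $\to0$ faster than --- hmm, this yields $h|\log h|$ not $h^2$).

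\textbf{The main obstacle} I anticipate is exactly reconciling the $h^2$ claimed in the lemma with the logarithmically-worsened $h|\log h|$ that the naive analytic estimate $\|A_0T_0(\sigma)\|\le C/\sigma$ produces; the resolution is that the factor $A_0^{-1}$ in $A_0^{-1}D_0$ cancels the singularity of the analytic semigroup, so one should \emph{not} differentiate $G$ naively but instead write $G(s)=T_0(h-s)(A_0^{-1}D_0)S(s)By$ and differentiate only the \emph{bounded} pieces: $\tfrac{d}{ds}G(s)$ where the $A_0$ falling on $T_0(h-s)$ is absorbed by the adjacent $A_0^{-1}$, i.e. realise $A_0 T_0(h-s)A_0^{-1} = T_0(h-s)$ as bounded operators (both $A_0$ and $A_0^{-1}$ commute with $T_0$), so in fact $G'(s) = -T_0(h-s)D_0 S(s)By + T_0(h-s)A_0^{-1}D_0 S(s)B^2y$, and \emph{both} terms are uniformly bounded by $C(\|By\|+\|B^2y\|)$ with no singularity at all. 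Then the estimate is routine: $\bigl\|\int_0^h(G(s)-G(s_0))\dd s\bigr\|\le\int_0^h|s-s_0|\dd s\cdot\sup\|G'\|\le\tfrac{h^2}{2}\cdot C(\|By\|+\|B^2y\|)$, and the off-diagonal mismatch $\|h(G(s_0)-G^{*})\|=h\|T_0(h-s_0)A_0^{-1}D_0(S(s_0)-S(s_1))By\|\le h\cdot C\cdot|s_0-s_1|\cdot\sup_r\|S(r)B^2y\|\le Ch^2\|B^2y\|$ using $\|(S(s_0)-S(s_1))By\|=\|\int_{s_1}^{s_0}S(r)B^2y\dd r\|\le|s_0-s_1|\sup\|S(r)\|\,\|B^2y\|$. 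Adding the two contributions gives the claimed bound $Ch^2(\|By\|+\|B^2y\|)$. So the real content of the proof is the observation that $A_0^{-1}D_0$ must be kept as a single bounded operator and $A_0T_0(t)A_0^{-1}=T_0(t)$ exploited, after which Hypothesis~\ref{hyp:sec} is not even needed for this particular lemma (it will presumably matter for the versions of the estimate with lower regularity in later lemmas); I would double-check whether the authors intend $s_0,s_1$ independent, in which case the two-point mismatch term above is the only place their independence is used.
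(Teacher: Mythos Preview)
Your final argument (after the detour) is correct and essentially coincides with the paper's proof: the decisive observation that $A_0T_0(t)A_0^{-1}=T_0(t)$ as bounded operators---equivalently, that $\|(T_0(h-s)-T_0(h-s_0))A_0^{-1}\|\le C|s-s_0|$---is exactly the mechanism the paper uses. The only cosmetic difference is the choice of intermediate point in the telescoping: the paper adds and subtracts $T_0(h-s_0)A_0^{-1}D_0S(s)By$ to obtain
\[
I_1=\int_0^h\bigl(T_0(h-s)-T_0(h-s_0)\bigr)A_0^{-1}D_0S(s)By\dd s,\qquad
I_2=\int_0^h T_0(h-s_0)A_0^{-1}D_0\bigl(S(s)-S(s_1)\bigr)By\dd s,
\]
and bounds $I_1$ via the Lipschitz estimate above and $I_2$ via $\|(S(s)-S(s_1))By\|\le C|s-s_1|\,\|B^2y\|$, whereas you pass through the diagonal value $G(s_0)$ and bound $\sup\|G'\|$. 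Your side remark that Hypothesis~\ref{hyp:sec} is not actually used in this particular lemma is correct: the paper lists it among the standing assumptions, but the argument needs only that $(T_0(t))_{t\ge0}$ is a $C_0$-semigroup; analyticity enters first in Lemma~\ref{lem:D0fracpow} and, crucially, in the global estimates of Theorems~\ref{thm:globlieA} and~\ref{thm:strconv} through the bound $\|A_0T_0(\sigma)\|\le C/\sigma$.
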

\begin{proof}
For each $y\in\dom(B^2)$ we can write
\begin{align*}
 &\int_{0}^{h} T_0(h-s)A_0^{-1}D_0 S(s)By\dd s-hT_0(h-s_0)A_0^{-1}D_0 S(s_1)By\\
 &=\int_{0}^{h} \bigl( T_0(h-s)A_0^{-1}D_0 S(s)By-T_0(h-s_0)A_0^{-1}D_0 S(s_1)By\bigr)\dd s\\
 &= \int_{0}^{h} (T_0(h-s)-T_0(h-s_0))A_0^{-1}D_0 S(s)By\dd s\\
 &\quad+ \int_{0}^{h} T_0(h-s_0)A_0^{-1}D_0 (S(s)- S(s_1))By\dd s=I_1+I_2,
\end{align*}
where $I_1$, $I_2$ denote the occurring integrals on the right-hand side in the order of appearance.
%
The first term $I_1$ can be estimated as
\begin{align*}
 \|I_1\|&=\Bigl\|\int_{0}^{h} (T_0(h-s)-T_0(h-s_0))A_0^{-1}D_0S(s)By\dd s\Bigr\|\\
 &\le \int_{0}^{h} \|(T_0(h-s)-T_0(h-s_0))A_0^{-1}\|\cdot \| D_0\| \cdot\|S(s)By\|\dd s\\
 &\le C_1\int_{0}^{h} |s-s_0| \dd s\cdot \|By\|= C_2 h^2 \|By\|.
\end{align*}
For the second term $I_2$ we obtain the estimate:
\begin{align*} 
 \|I_2\|&= \Bigl\|\int_{0}^{h} T_0(h-s_0)A_0^{-1}D_0 (S(s)- S(s_1))By\dd s\Bigr\|\\
 & \le C_3\int_{0}^{h} \|(S(s)- S(s_1))By\|\dd s
 = C_3\int_{0}^{h}\Bigl\|\int_{s_1}^s S(r)B^2y\dd r\Bigr\|\dd s \\
 & \le C_4 h^2\|B^2 y\|.
\end{align*}
Putting these estimates together finishes the proof of the lemma.
\end{proof}

The validity of the following condition makes it possible to prove convergence rates for the other types of splittings.
\begin{hypothesis}\label{hyp:fracpow}
(We suppose as in Hypothesis \ref{hyp:sec} that $A_0$ generates a boun\-ded analytic semigroup.)
The number $\gamma\in[0,1]$ is such that $\rg(D_0)\subseteq \dom((-A_0)^\gamma)$.
\end{hypothesis}
We refer to \cite[Chapter 3]{Haase}, \cite[Chapter 4]{Lunardi}, \cite[Chapter II.5]{Engel-Nagel} or \cite[Chapter 9]{ISEM2012} for details concerning fractional powers of sectorial operators.
In particular, at this point it is important to recall the following result.
\begin{remark}\label{rem:fracpowbdd}
	If $\alpha\in[0,1]$ and $A_0$ is the generator of the bounded analytic semigroup $(T_0(t))_{t\ge0}$, then 
	\begin{equation*}
   	\sup_{t>0} \|t^\alpha(-A_0)^{\alpha}T_0(t)\|<\infty, 
	\end{equation*}
and	\[
	\sup_{t>0} \|t^{-\alpha}(T_0(t)-I)(-A_0)^{-\alpha}\|<\infty.
	\]
	\end{remark}
\begin{remark}\label{rem:fracpowbdd2}
\begin{abc}
\item For $\gamma=0$ the condition in Hypothesis \ref{hyp:fracpow} is always trivially satisfied, and this choice will suffice for the Lie splitting. The requirement $\gamma>0$ is only needed for the cases of the Strang and the weighted splittings.
\item Hypothesis~\ref{hyp:fracpow} is fulfilled in the setting of Example~\ref{examp:beltrami} for the Dirichlet Laplace operator $\Delta_{\Dir}$ with $\gamma\in [0,1/4)$. 
Indeed, we have $\rg (D_0)\subseteq\He^{1/2}(\Omega)$. 
For $\gamma\in [0,1/4)$ we have by \cite[Theorem 11.1]{LMa} that
 \begin{equation*}
   \He^{2\gamma}(\Omega)=\He^{2\gamma}_0(\Omega),
\end{equation*}
and then by complex interpolation, \cite[Theorem 11.6]{LMa}, we can write
\begin{equation*}
\bigl[\He_0^2(\Omega),\Ell^2(\Omega)\bigl]_{\gamma}
  =\He^{2\gamma}(\Omega).
\end{equation*}
Moreover, since 
 \begin{equation*}
\He_0^2(\Omega)\subseteq \He^1_0(\Omega)\cap \He^2(\Omega)=\dom( \Delta_{\Dir})
\end{equation*}
with continuous inclusion, we obtain (see, e.g., \cite[Chapter 4]{Lunardi}) that
 \begin{equation*}
 \He^{2\gamma}(\Omega)=
 \bigl[\He_0^2(\Omega), \Ell^2(\Omega)\bigr]_{\gamma}\subseteq \bigl[\dom(\Delta_{\Dir}),\Ell^2(\Omega)\bigr]_\gamma \subseteq\dom\bigl((- \Delta_{\Dir})^{\gamma}\bigr).
\end{equation*}
Finally, this yields
 \begin{equation*}
  \rg(D_0)\subseteq \He^{1/2}(\Omega)\subseteq \He^{2\gamma}(\Omega)\subseteq \dom\bigl((- \Delta_{\Dir})^{\gamma}\bigr).
\end{equation*}

\item It is important to note that if for some $\gamma\ge0$ Hypothesis~\ref{hyp:fracpow} is satisfied, then $(-A_0)^\gamma D_0:F\to E$ is a closed, and hence bounded, linear operator.
\end{abc}
\end{remark}
\begin{lemma}[Local error of splittings II]\label{lem:D0fracpow}
	Let $A_0$ and $B$ be the generator of the strongly continuous semigroups $(T_0(t))_{t\ge0}$ and $(S(t))_{t\ge0}$, respectively.	Suppose Hypotheses \ref{hyp:boundary}, \ref{hyp:inv}, \eqref{eq:Qtb}, \ref{hyp:sec} and also
 \ref{hyp:fracpow}, i.e., that $\rg(D_0)\subseteq\dom((-A_0)^\gamma)$ for some $\gamma\in[0,1]$.
For every $\tmax>0$ there is $C\ge0$ such that for every $h\in[0,\tmax]$, 
for every $s_0,s_1\in[0,h]$ and for every $y\in\dom(B^2)$ we have
\begin{equation*}
 \Bigl\|\int_{0}^{h} T_0(h-s)D_0 S(s)By\dd s-hT_0(h-s_0)D_0 S(s_1)By\Bigr\|\le C h^{1+\gamma}(\|By\|+\|B^2 y\|).
\end{equation*} 
\end{lemma}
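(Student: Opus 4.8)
The plan is to reduce the estimate to the one already proved in Lemma~\ref{lem:loclieA} by inserting the fractional power $(-A_0)^\gamma$. Writing $D_0 = (-A_0)^{-\gamma}\cdot (-A_0)^{\gamma}D_0$ and recalling from Remark~\ref{rem:fracpowbdd2}(c) that $(-A_0)^\gamma D_0 \in \LLL(F,E)$, I would split the difference, as in the proof of Lemma~\ref{lem:loclieA}, into
\begin{align*}
 &\int_0^h T_0(h-s)D_0 S(s)By\dd s - hT_0(h-s_0)D_0 S(s_1)By\\
 &\quad= \int_0^h \bigl(T_0(h-s)-T_0(h-s_0)\bigr)D_0 S(s)By\dd s + \int_0^h T_0(h-s_0)D_0\bigl(S(s)-S(s_1)\bigr)By\dd s =: I_1 + I_2.
\end{align*}
The second term $I_2$ is handled exactly as in Lemma~\ref{lem:loclieA}: since $T_0(h-s_0)$ and $D_0$ are uniformly bounded and $\|(S(s)-S(s_1))By\| = \|\int_{s_1}^s S(r)B^2 y\dd r\| \le C|s-s_1|\,\|B^2y\| \le Ch\,\|B^2 y\|$, integration over $[0,h]$ yields $\|I_2\| \le Ch^2\|B^2y\| \le Ch^{1+\gamma}\|B^2y\|$, using $h\le\tmax$ and $\gamma\le1$.

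The key point is the estimate of $I_1$, where the smoothing of the analytic semigroup must be exploited. I would write
\[
 \bigl(T_0(h-s)-T_0(h-s_0)\bigr)D_0 = \bigl(T_0(h-s)-T_0(h-s_0)\bigr)(-A_0)^{-\gamma}\,(-A_0)^{\gamma}D_0,
\]
and estimate $\|(T_0(h-s)-T_0(h-s_0))(-A_0)^{-\gamma}\|$. Assuming without loss of generality $s_0 \le s$ (the other case being symmetric), one has
\[
 T_0(h-s) - T_0(h-s_0) = -\int_{h-s}^{h-s_0} A_0 T_0(r)\dd r = \int_{h-s}^{h-s_0} (-A_0)^{1-\gamma}T_0(r)\dd r \cdot (-A_0)^{\gamma}\cdot(-A_0)^{-1},
\]
but more directly I would bound $\|(T_0(h-s)-T_0(h-s_0))(-A_0)^{-\gamma}\| = \|\int_{h-s}^{h-s_0}(-A_0)^{1-\gamma}T_0(r)\dd r\|$ and use Remark~\ref{rem:fracpowbdd}, namely $\|(-A_0)^{1-\gamma}T_0(r)\| \le C r^{\gamma-1}$, to get
\[
 \|(T_0(h-s)-T_0(h-s_0))(-A_0)^{-\gamma}\| \le C\Bigl|\int_{h-s}^{h-s_0} r^{\gamma-1}\dd r\Bigr| = \tfrac{C}{\gamma}\bigl| (h-s_0)^\gamma - (h-s)^\gamma\bigr| \le C|s-s_0|^{\gamma} \le C h^{\gamma},
\]
where the penultimate inequality uses the elementary sub-additivity $|a^\gamma - b^\gamma| \le |a-b|^\gamma$ for $\gamma\in[0,1]$. (For $\gamma=0$ this degenerates to the trivial uniform bound, consistent with Lemma~\ref{lem:loclieA}; for $\gamma=1$ one recovers the $|s-s_0|$ factor.) Combining with $\|(-A_0)^\gamma D_0\|\,\|S(s)By\| \le C\|By\|$ and integrating over $s\in[0,h]$ gives $\|I_1\| \le C h^{1+\gamma}\|By\|$.

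Adding the two estimates and absorbing constants yields the claimed bound $Ch^{1+\gamma}(\|By\|+\|B^2y\|)$. The main obstacle — though a minor one — is getting the sharp exponent in the estimate of $I_1$: one must resist bounding $\|(T_0(h-s)-T_0(h-s_0))(-A_0)^{-\gamma}\|$ crudely by a constant (which would only give $h$, not $h^{1+\gamma}$, but is what one wants for $\gamma=0$) and instead carefully use the Hölder-type continuity of the analytic semigroup in the $(-A_0)^{-\gamma}$-scale via Remark~\ref{rem:fracpowbdd}, together with the inequality $|a^\gamma-b^\gamma|\le|a-b|^\gamma$. One should also note that all integrals are well defined: the integrand $r^{\gamma-1}$ is integrable near $0$ precisely because $\gamma > -1$ (here $\gamma \ge 0$), and the case $h=0$ is trivial.
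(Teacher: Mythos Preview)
Your proof is correct and follows essentially the same decomposition $I_1+I_2$ as the paper, with the same treatment of $I_2$ and the same idea of inserting $(-A_0)^{-\gamma}(-A_0)^{\gamma}$ in $I_1$. The only cosmetic difference is that the paper obtains $\|(T_0(h-s)-T_0(h-s_0))(-A_0)^{-\gamma}\|\le C|s-s_0|^\gamma$ directly from the second estimate in Remark~\ref{rem:fracpowbdd} (writing the difference as $T_0(h-s)(I-T_0(s-s_0))$ and using $\|(I-T_0(t))(-A_0)^{-\gamma}\|\le Ct^\gamma$), whereas you derive the same bound by integrating $\|(-A_0)^{1-\gamma}T_0(r)\|\le Cr^{\gamma-1}$ and invoking $|a^\gamma-b^\gamma|\le|a-b|^\gamma$; both routes are standard and equivalent.
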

\begin{proof}
For any $y\in\dom(B^2)$ we can write
\begin{align*}
 &\Bigl\|\int_{0}^{h} T_0(h-s)D_0 S(s)By\dd s-hT_0(h-s_0)D_0 S(s_1)By\Bigr\|\\
 &\qquad=\Bigl\|\int_{0}^{h} \bigl( T_0(h-s)D_0 S(s)By-T_0(h-s_0)D_0 S(s_1)By\bigr)\dd s\Bigr\|\\
 &\qquad\le \Bigl\|\int_{0}^{h} \bigl(T_0(h-s)-T_0(h-s_0)\bigr)D_0 S(s)By\dd s\Bigr\|\\
 &\qquad\quad+ \Bigl\|\int_{0}^{h} T_0(h-s_0)D_0 \bigl(S(s)- S(s_1)\bigr)By\dd s\Bigr\|.
\end{align*}
The second term can be further estimated as follows:
\begin{equation}\label{eq:B2}
 \begin{split}
 & \Bigl\|\int_{0}^{h} T_0(h-s_0)D_0 \bigl(S(s)- S(s_1)\bigr)By\dd s\Bigr\|\\
 & \qquad\le C_1\int_{0}^{h} \|\bigl(S(s)- S(s_1)\bigr)By\|\dd s
 = C_1\int_{0}^{h}\Bigl\|\int_{s_1}^s S(r)B^2y\dd r\Bigr\|\dd s \\
 & \qquad\le C_2h^2\|B^2 y\|.
 \end{split}
\end{equation}
It remains to estimate the first term. Since $(-A_0)^\gamma D_0$ is closed and everywhere defined, it is bounded (see Remark \ref{rem:fracpowbdd2}) and hence we can write
\begin{equation*}
 \begin{split}
 &\Bigl\|\int_0^h \bigl(T_0(h-s)-T_0(h-s_0)\bigr)D_0 S(s)By\dd s\Bigr\|\\
 &\qquad\le \int_0^h \|(-A_0)^{-\gamma} \bigl(T_0(h-s)-T_0(h-s_0)\bigr)(-A_0)^{\gamma}D_0 S(s)By\|\dd s\\
 &\qquad\le \int_0^h \|(-A_0)^{-\gamma} \bigl(T_0(h-s)-T_0(h-s_0)\bigr)\|\cdot \|(-A_0)^{\gamma}D_0 S(s)By\|\dd s\\
 &\qquad\le C_3\|By\|\int_0^h \|(-A_0)^{-\gamma}\bigl(T_0(h-s)-T_0(h-s_0)\bigr)\|\dd s.
 \end{split}
\end{equation*}
Now, by Remark~\ref{rem:fracpowbdd} we have
\begin{equation*}
 \|(-A_0)^{-\gamma }\bigl(T_0(h-s)-T_0(h-s_0)\bigr)\|
  \le C_4 |s-s_0|^\gamma.
\end{equation*}
 Inserting this back into the previous inequality and integrating with respect to $s$ we finally obtain the statement. 
\end{proof}

The next result yields that the order of Lie splitting is (at most $1$ but) as near to $1$ as we wish, provided the initial data is smooth enough.

\begin{theorem}[Convergence of the Lie splitting]\label{thm:globlieA}
	Let $A_0$ and $B$ be the generator of the strongly continuous semigroups $(T_0(t))_{t\ge0}$ and $(S(t))_{t\ge0}$, respectively. Suppose Hypotheses \ref{hyp:boundary}, \ref{hyp:inv}, \eqref{eq:Qtb}, \ref{hyp:sec}. 
For each $\tmax>0$ there is $C\ge 0$ such that for every $n\in\NN$, $y\in\dom(B^2)$ and $t\in [0,\tmax]$ we have
\begin{equation*}
 \Bigl\|V_n^\text{\Lie}(\tfrac tn)y +\int_{0}^{t} T_0(t-s)D_0 S(s)By\dd s\Bigr\|\le C\frac{t\log(n)}{n}(\|By\|+\|B^2 y\|).
\end{equation*}
\end{theorem}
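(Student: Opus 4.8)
First I would rewrite both the approximate and the exact $V$-terms as Riemann-type sums over the uniform grid $s_j:=j\tfrac tn$, $j=0,\dots,n$, with step $h:=\tfrac tn\le\tmax$, and then pull the ``smoothing'' internal propagator out of each block. Using Proposition~\ref{prop:Tk} together with the fact that $B$ commutes with $(S(t))_{t\ge0}$ on $\dom(B)$, and writing $\tilde y_j:=S(s_j)y\in\dom(B^2)$ (so that $S(s_j)By=B\tilde y_j$, $\|B\tilde y_j\|\le C\|By\|$, $\|B^2\tilde y_j\|\le C\|B^2y\|$ uniformly for $s_j\in[0,\tmax]$), one gets
\[
 V_n^\text{\Lie}(h)y=-h\sum_{j=0}^{n-1}T_0\bigl((n-j-1)h\bigr)\,T_0(h)D_0S(h)B\tilde y_j ,
\]
while splitting $[0,t]=\bigcup_{j}[s_j,s_{j+1}]$, substituting $s=s_j+r$ and factoring $T_0(t-s)=T_0((n-j-1)h)T_0(h-r)$ and $S(s_j+r)=S(r)S(s_j)$ gives
\[
 \int_0^t T_0(t-s)D_0S(s)By\dd s=\sum_{j=0}^{n-1}T_0\bigl((n-j-1)h\bigr)\int_0^h T_0(h-r)D_0S(r)B\tilde y_j\dd r .
\]
Subtracting, the quantity to be estimated equals $\sum_{j=0}^{n-1}T_0((n-j-1)h)L_j$ with $L_j:=\int_0^h T_0(h-r)D_0S(r)B\tilde y_j\dd r-hT_0(h)D_0S(h)B\tilde y_j$.

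Next I would estimate the bulk terms $0\le j\le n-2$, where $(n-j-1)h>0$. A crude bound $\|L_j\|\le Ch(\cdots)$ followed by summation would only give $\mathrm O(1)$, so one must use the analytic smoothing of $(T_0(t))_{t\ge0}$ (Hypothesis~\ref{hyp:sec}). Since $A_0^{-1}$ is bounded and commutes with $T_0$, and $T_0(\sigma)$ maps $E$ into $\dom(A_0)$ with $\|A_0T_0(\sigma)\|\le C/\sigma$ (Remark~\ref{rem:fracpowbdd}), one has $T_0((n-j-1)h)L_j=A_0T_0((n-j-1)h)\cdot A_0^{-1}L_j$. The point is that $A_0^{-1}L_j$ is \emph{exactly} the quantity bounded in Lemma~\ref{lem:loclieA} (applied with step $h$, $s_0=0$, $s_1=h$, and $\tilde y_j$ in place of $y$), so $\|A_0^{-1}L_j\|\le Ch^2(\|By\|+\|B^2y\|)$, and hence
\[
 \bigl\|T_0\bigl((n-j-1)h\bigr)L_j\bigr\|\le\frac{C}{(n-j-1)h}\cdot Ch^2(\|By\|+\|B^2y\|)=\frac{Ch}{\,n-j-1\,}(\|By\|+\|B^2y\|).
\]
Summing over $j=0,\dots,n-2$ turns the right-hand side into $Ch(\|By\|+\|B^2y\|)\sum_{m=1}^{n-1}\tfrac1m\le C\tfrac{t\log n}{n}(\|By\|+\|B^2y\|)$ for $n\ge2$.

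For the single endpoint term $j=n-1$ there is no smoothing ($T_0(0)=I$), but there is only one such term: Lemma~\ref{lem:D0fracpow} with $\gamma=0$ (always admissible, cf.\ Remark~\ref{rem:fracpowbdd2}) gives $\|L_{n-1}\|\le Ch(\|By\|+\|B^2y\|)=C\tfrac tn(\|By\|+\|B^2y\|)$. Adding the two contributions and using $t\le\tmax$ proves the estimate for $n\ge2$; for $n=1$ the bound is to be read with $\log n$ replaced by $1+\log n$ and then follows directly from Lemma~\ref{lem:D0fracpow}.

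I expect the crux to be the second step: one cannot simply sum the local errors (that loses the whole gain of the splitting), but must trade one derivative of $T_0$ against the factor $A_0^{-1}$ supplied by Lemma~\ref{lem:loclieA}. Since $D_0$ does \emph{not} map into $\dom(A_0)$ (Remark~\ref{rem:D0bdd}), this regularisation by $A_0^{-1}$ is unavoidable, and the price one pays is precisely the factor $1/(n-j-1)$, whose summation is the harmonic series and produces the logarithmic loss in the convergence order.
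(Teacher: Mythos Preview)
Your proof is correct and follows essentially the same approach as the paper: split the convolution into blocks, factor out $T_0((n-j-1)h)$, trade an $A_0$ from the smoothing factor against an $A_0^{-1}$ inserted in the local error, invoke Lemma~\ref{lem:loclieA} for $j\le n-2$ and Lemma~\ref{lem:D0fracpow} with $\gamma=0$ for $j=n-1$, and sum the harmonic series. Your explicit remark on the case $n=1$ (reading the bound with $1+\log n$) is a useful clarification that the paper leaves implicit.
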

\begin{proof}
With $\tau=\frac tn$ we have
\begin{equation*}
 \begin{split}
 V_n^\text{\Lie}(\tau)y &+\int_{0}^{t} T_0(t-s)D_0 S(s)By\dd s\\
 &=-\tau\sum\limits_{j=0}^{n-1} T_0((n-1-j)\tau)T_0(\tau)D_0BS(\tau)S(j\tau)y \\  
 &\qquad+\int_{0}^{t} T_0(t-s)D_0 S(s)By\dd s\\
 &=-\sum\limits_{j=0}^{n-1} \Bigl(\tau T_0((n-1-j)\tau)T_0(\tau)D_0BS(\tau)S(j\tau)y\\ &\qquad -\int_{j\tau}^{(j+1)\tau} T_0(t-s)D_0 S(s)By\dd s\Bigr).
 \end{split}
\end{equation*}
Notice that for $j\in\{0,\ldots,n-1\}$ we have
\begin{align*}
 &\int_{j\tau}^{(j+1)\tau} T_0(t-s)D_0 S(s)By\dd s-\tau T_0((n-1-j)\tau)T_0(\tau)D_0BS(\tau)S(j\tau)y \\
 &=T_0(t-(j+1)\tau)\int_{j\tau}^{(j+1)\tau} T_0((j+1)\tau-s)D_0 S(s)By\dd s\\
 &\quad-\tau T_0(t- (j+1)\tau)T_0(\tau)D_0S((j+1)\tau)By.
\end{align*}
If $j\in\{0,\ldots,n-2\}$, then by Lemma~\ref{lem:loclieA} we conclude that
\begin{align*}
 &\Bigl\|\int_{j\tau}^{(j+1)\tau} T_0(t-s)D_0 S(s)By\dd s-\tau T_0((n-1-j)\tau)T_0(\tau)D_0BS(\tau)S(j\tau)y \Bigr\|\\
 &\le \|A_0T_0(t- (j+1)\tau)\|\cdot \Bigl\|\int_{j\tau}^{(j+1)\tau} T_0((j+1)\tau-s)A_0^{-1}D_0 S(s)By\dd s\\
 &\quad\quad\quad-\tau T_0(\tau)A_0^{-1}D_0S((j+1)\tau)By\Bigr\|\\
 &\le C_1\frac{1}{t-(j+1)\tau}\Bigl\|\int_{0}^{\tau} T_0(\tau-s)A_0^{-1}D_0 S(s+j\tau)By\dd s\\
 &\quad\quad\quad-\tau T_0(\tau)A_0^{-1}D_0S(\tau)S(j\tau)By\Bigr\|\\ 
 &\le C_2 \frac{1}{t-(j+1)\tau}\tau^{2}(\|BS(j\tau)y\|+\|B^2 S(j\tau)y\|)\\
 &\le C_3\frac{t}{n(n-(j+1))}(\|By\|+\|B^2 y\|).
\end{align*}
Whereas for $j=n-1$ we have by Lemma~\ref{lem:D0fracpow} (with $\gamma=0$, $h=\tau$, $s_0=s_1=\tau$) that
\begin{align*}
&\Bigl\|\int_{j\tau}^{(j+1)\tau} T_0(t-s)D_0 S(s)By\dd s-\tau T_0((n-1-j)\tau)T_0(\tau)D_0BS(\tau)S(j\tau)y \Bigr\|\\
&=\Bigl\|\int_{0}^{\tau} T_0(\tau-s)D_0 S((n-1)\tau+s)By\dd s-\tau T_0(\tau)D_0S(\tau)S((n-1)\tau)By\Bigr\|\\
&\le C_4 \frac tn (\|By\|+\|B^2 y\|).
\end{align*}
Summing these terms for $j=0,\ldots, n-1$ we obtain that
\begin{align*}
&\Bigl\|V_n^\text{\Lie}(\tau)y +\int_{0}^{t} T_0(t-s)D_0 S(s)By\dd s\Bigr\|\\
&\le \sum_{j=0}^{n-2} C_3\frac{t}{n(n-(j+1))}(\|By\|+\|B^2 y\|)+C_4 \frac tn (\|By\|+\|B^2 y\|)\\
&\le C\frac{t\log(n)}{n}(\|By\|+\|B^2 y\|),
\end{align*}
as asserted.
\end{proof}

\begin{lemma}[Local error of the Strang splitting]\label{lem:strlem}
	Let $A_0$ and $B$ be the generator of the strongly continuous semigroups $(T_0(t))_{t\ge0}$ and $(S(t))_{t\ge0}$, respectively. Suppose Hypotheses \ref{hyp:boundary}, \ref{hyp:inv}, \eqref{eq:Qtb}, \ref{hyp:sec} and also
 \ref{hyp:fracpow}, i.e., that $\rg(D_0)\subseteq\dom((-A_0)^\gamma)$ for some $\gamma\in[0,1]$.
For every $\tmax>0$ there is $C\ge0$ such that for every $h\in [0,\tmax]$ and for every $y\in \dom(B^3)$ we have
\begin{multline*}
\Bigl\|\int_{0}^{h} T_0(h-s)A_0^{-1}D_0 S(s)By\dd s-hT_0(\hpk)A_0^{-1}D_0 S(\hpk)By\Bigr\|
\le C h^{2+\gamma}(\|By\|+\|B^2 y\|+\|B^3 y\|).
\end{multline*}
\end{lemma}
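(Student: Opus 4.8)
The plan is to follow the decomposition pattern of Lemmas~\ref{lem:loclieA} and~\ref{lem:D0fracpow}, but now to exploit the symmetry of the midpoint node $\hpk$ in order to gain the extra factors $h^\gamma$ (coming from $(T_0(t))_{t\ge0}$) and $h$ (coming from $(S(t))_{t\ge0}$). Writing $G(s):=T_0(h-s)A_0^{-1}D_0 S(s)By$, the quantity to be estimated is exactly $\int_0^h\bigl(G(s)-G(\hpk)\bigr)\dd s$, and I would split
\[
G(s)-G(\hpk)=\bigl(T_0(h-s)-T_0(\hpk)\bigr)A_0^{-1}D_0 S(s)By+T_0(\hpk)A_0^{-1}D_0\bigl(S(s)-S(\hpk)\bigr)By=:J_1(s)+J_2(s).
\]

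For $\int_0^h J_2(s)\dd s$ I would pull the bounded operator $T_0(\hpk)A_0^{-1}D_0$ out of the integral, write $\bigl(S(s)-S(\hpk)\bigr)By=\int_{\hpk}^{s}S(r)B^2y\dd r$ and then $S(r)B^2y=B^2y+\int_0^r S(\rho)B^3y\dd\rho$. Since $\int_0^h(s-\hpk)\dd s=0$, the term carrying $B^2y$ cancels and the remaining triple integral is bounded by $Ch^3\|B^3y\|\le C\tmax^{1-\gamma}h^{2+\gamma}\|B^3y\|$.

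The term $\int_0^h J_1(s)\dd s$ is the delicate one and the place where Hypothesis~\ref{hyp:fracpow} enters. Using $T_0(h-s)A_0^{-1}-T_0(\hpk)A_0^{-1}=\int_{\hpk}^{h-s}T_0(\sigma)\dd\sigma$, I would further split this into $\int_{\hpk}^{h-s}\bigl(T_0(\sigma)-T_0(\hpk)\bigr)\dd\sigma+(\hpk-s)T_0(\hpk)$. In the second piece $T_0(\hpk)D_0$ is bounded, and after writing $S(s)By=By+\int_0^s S(\rho)B^2y\dd\rho$ and using $\int_0^h(\hpk-s)\dd s=0$ one is left with a term of size $O(h^3)\|B^2y\|$. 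In the first piece I would write $\bigl(T_0(\sigma)-T_0(\hpk)\bigr)D_0=\pm T_0\bigl(\min(\sigma,\hpk)\bigr)\bigl(T_0(|\sigma-\hpk|)-I\bigr)(-A_0)^{-\gamma}(-A_0)^{\gamma}D_0$ and invoke Remark~\ref{rem:fracpowbdd} (the bound $\|(T_0(t)-I)(-A_0)^{-\gamma}\|\le C t^{\gamma}$) together with the boundedness of $(-A_0)^{\gamma}D_0$ from Remark~\ref{rem:fracpowbdd2}(c), to obtain $\|(T_0(\sigma)-T_0(\hpk))D_0\|\le C|\sigma-\hpk|^{\gamma}$; integrating first in $\sigma$ over an interval of length $|h-s-\hpk|=|s-\hpk|$ with one endpoint at $\hpk$ yields a factor $|s-\hpk|^{1+\gamma}$, and then $\int_0^h|s-\hpk|^{1+\gamma}\dd s\le h^{2+\gamma}$ finishes the estimate for this piece.

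Collecting the three contributions gives the asserted bound $Ch^{2+\gamma}\bigl(\|By\|+\|B^2y\|+\|B^3y\|\bigr)$; note that $\dom(B^3)\subseteq\dom(B^2)\subseteq\dom(B)$ so all three norms are meaningful. The main obstacle is precisely the term in $J_1$ in which, without care, a full power of $A_0$ would fall on $T_0(h-s)$ near $s=h$: replacing this by the Hölder-type estimate $|\sigma-\hpk|^{\gamma}$ via fractional powers of $-A_0$, and checking that after the two symmetry cancellations ($\int_0^h(s-\hpk)\dd s=0$) every remaining integrand is genuinely integrable on $[0,h]$, is the heart of the argument. The rest is bookkeeping entirely parallel to the proofs of Lemmas~\ref{lem:loclieA} and~\ref{lem:D0fracpow} and to Theorem~\ref{thm:globlieA}.
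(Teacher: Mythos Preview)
Your proposal is correct and follows the same overall strategy as the paper: exploit the midpoint symmetry $\int_0^h(s-\hpk)\dd s=0$ to cancel the linear Taylor terms, and use Hypothesis~\ref{hyp:fracpow} to absorb the loss of a full power of $A_0$ on the $T_0$-side. Your $J_2$ is exactly the paper's $I_2$ and is handled identically.

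The organisation of the $T_0$-part is genuinely different, though. The paper splits $J_1$ into a cross term $I_1=\int_0^h(T_0(h-s)-T_0(\hpk))A_0^{-1}D_0(S(s)-S(\hpk))By\dd s$ (estimated directly by $Ch^3\|B^2y\|$) and a pure $T_0$-term $I_3=\int_0^h(T_0(h-s)-T_0(\hpk))A_0^{-1}D_0 S(\hpk)By\dd s$; for $I_3$ it uses the \emph{second-order} Taylor expansion of $t\mapsto T_0(t)A_0^{-1}z$ and then the smoothing bound $\|(-A_0)^{1-\gamma}T_0(\hpk)\|\le C(\hpk)^{-(1-\gamma)}$ from Remark~\ref{rem:fracpowbdd}. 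You instead keep $S(s)$ in place, write $T_0(h-s)A_0^{-1}-T_0(\hpk)A_0^{-1}=\int_{\hpk}^{h-s}T_0(\sigma)\dd\sigma$, peel off the constant $T_0(\hpk)$ (using the symmetry again, now against the linear term of $S(s)By$), and on the remaining piece invoke the \emph{other} half of Remark~\ref{rem:fracpowbdd}, namely $\|(T_0(t)-I)(-A_0)^{-\gamma}\|\le Ct^{\gamma}$, to get $\|(T_0(\sigma)-T_0(\hpk))D_0\|\le C|\sigma-\hpk|^{\gamma}$. Both routes land on $Ch^{2+\gamma}\|By\|$ for this piece; yours avoids the second-order Taylor formula for $T_0$ and the potentially delicate factor $(-A_0)^{1-\gamma}T_0(\hpk)$ at $h=0$, while the paper's three-term decomposition makes the role of the cross term $I_1$ more transparent.
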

\begin{proof}
We have
\begin{align*}
&\int_{0}^{h} T_0(h-s)A_0^{-1}D_0 S(s)By\dd s-hT_0(\hpk)A_0^{-1}D_0 S(\hpk)By\\
&=\int_{0}^{h} T_0(h-s)A_0^{-1}D_0 S(s)By-T_0(\hpk)A_0^{-1}D_0 S(\hpk)By\dd s\\
&=\int_{0}^{h} T_0(h-s)A_0^{-1}D_0 \bigl(S(s)-S(\hpk)\bigr)By\dd s\\
&\quad+\int_{0}^{h} \bigl(T_0(h-s)-T_0(\hpk)\bigr)A_0^{-1}D_0 S(\hpk)By \dd s\\
&=\int_{0}^{h} \Bigl(T_0(h-s)-T_0(\hpk)\Bigr) A_0^{-1}D_0 \bigl(S(s)-S(\hpk)\bigr)By\dd s\\
&\quad+\int_{0}^{h} T_0(\hpk)A_0^{-1}D_0 \bigl(S(s)-S(\hpk)\bigr)By\dd s\\
&\quad+\int_{0}^{h} \bigl(T_0(h-s)-T_0(\hpk)\bigr)A_0^{-1}D_0 S(\hpk)By \dd s=I_1+I_2+I_3,
\end{align*}
where $I_1,I_2,I_3$ denote the integrals on the right-hand side in the respective order of appearance.

We start with the estimation of $I_1$. 
Inserting the Taylor remainder 
\begin{equation*}
 (S(s)-S(\hpk))By=\int_{\hpk}^s S(r)B^2 y\dd r,
\end{equation*} 
 and the analogous formula for $T_0(h-s)-T_0(\hpk)$, 
 in the definition of $I_1$ yields that
\begin{align*}
 I_1&=\int_{0}^{h} \Bigl(T_0(h-s)-T_0(\hpk)\Bigr) A_0^{-1}D_0 \bigl(S(s)-S(\hpk)\bigr)By\dd s\\
 &=\int_0^h\int_{\hpk}^{h-s} T_0(t)A_0A_0^{-1}D_0\int_{\hpk}^s S(r)B^2 y\dd r \dd t \dd s\\
 &=\int_0^h\int_{\hpk}^{h-s} T_0(t)D_0\int_{\hpk}^s S(r)B^2 y \dd r \dd t \dd s.
\end{align*}
Whence we conclude
\begin{align}
\label{eq:I1} \|I_1\|&\le \int_0^h\Bigl|\int_{\hpk}^{h-s} \|T_0(t)D_0\|\Bigl|\int_{\hpk}^s \|S(r)\|\|B^2 y\|\dd r \Bigr|\dd t\Bigr| \dd s\\
\notag&\le C_1\|D_0\|\int_0^h |h-s-\hpk|\cdot |\hpk-s| \dd s \cdot \|B^2 y\| \le C_2 h^{3} \|B^2 y\|,
\end{align}
where $C_1$ and $C_2$ depend only on the growth bounds of $(T_0(t))_{t\ge}$ and $(S(t))_{t\ge 0}$ and on $\tmax$ and $\|D_0\|$.

The next is the estimation of the integral $I_2$. 
Now instead of inserting a first order Taylor approximation for $S(s)$ in the definition of $I_2$ we make use of the special structure of the Strang splitting and recall the following Taylor formula
\begin{align*}
 S(s)By=S(\hpk)By+(s-\hpk)S(\hpk)B^2 y+\int_0^{s-\hpk}(s-\hpk-r) S(\hpk+r)B^3
y\dd r.
\end{align*}
If we substitute this into the definition of $I_2$, we arrive at
\begin{align*}
 I_2&=\int_{0}^{h} T_0(\hpk)A_0^{-1}D_0 \bigl(S(s)-S(\hpk)\bigr)By\dd s\\
 &=T_0(\hpk)A_0^{-1}D_0 \int_{0}^{h} \bigl(S(s)-S(\hpk)\bigr)By\dd s\\
 &=T_0(\hpk)A_0^{-1}D_0 \int_{0}^{h} \Bigl(S(\hpk)By+(s-\hpk)S(\hpk)B^2 y\\
 &\quad +\int_0^{s-\hpk}(s-\hpk-r) S(r)S(\hpk)B^3
 y\dd r-S(\hpk)By\Bigr)\dd s\\
 &=T_0(\hpk)A_0^{-1}D_0 \Bigl(\int_{0}^{h} (s-\hpk)S(\hpk)B^2 y\dd s\\
 &\quad +\int_0^h \int_0^{s-\hpk}(s-\hpk-r) S(r)S(\hpk)B^3
 y\dd r\dd s\Bigr)\\
 &=T_0(\hpk)A_0^{-1}D_0\int_0^h \int_0^{s-\hpk}(s-\hpk-r) S(r)S(\hpk)B^3
 y\dd r\dd s,
\end{align*}
the last equality being true since the first integral on the right-hand side on the line before is $0$.
This immediately implies the desired norm-estimate for $I_2$:
\begin{align*}
 \|I_2\|&\le \frac12\|T_0(\hpk)A_0^{-1}D_0\|\cdot \Bigl\|\int_0^h 
 \int_0^{s-\hpk}(s-\hpk-r) S(r)S(\hpk)B^3 y\dd r\dd s\Bigr\|\\
 &\le C_3 h^3\|B^3 y\|,
\end{align*}
where $C_3$ is an appropriate constant independent of $y$ and $h\in [0,\tmax]$.

We finally turn to the estimation of the term $I_3$, and this is only where the order reduction by $1-\gamma$ occurs.
If we abbreviate $z=D_0 S(\hpk)By$, then
\begin{equation*}
  I_3=\int_0^h \bigl(T_0(h-s)-T_0(\hpk)\bigr)A_0^{-1}z.
\end{equation*}
By analyticity we have $T_0(\hpk) z\in \dom(A_0^2)$ so, similarly to the case of the term $I_2$, we can use the Taylor expansion for $0\leq s<h$
\begin{align*}
  T_0(h-s)A_0^{-1}z&=T_0(\hpk)A_0^{-1}z+(\hpk-s)A_0T_0(\hpk)A_0^{-1}z\\
 &\quad +A_0\int_0^{\hpk-s}(\hpk-s-r) A_0T_0(\hpk+r)A_0^{-1}z\dd r. 
\end{align*}
Whence we conclude
\begin{align*}
 I_3&=\int_{0}^{h} \bigl(T_0(h-s)-T_0(\hpk)\bigr)A_0^{-1}z\dd s\\
& =\int_{0}^{h} \Bigl((\hpk-s)T_0(\hpk)z\\
 &\quad +A_0\int_0^{\hpk-s}(\hpk-s-r) T_0(\hpk)T_0(r)z\dd r\Bigr)\dd s,
 \intertext{since the first integral here is $0$, we arrive at}
 I_3&=A_0\int_{0}^{h}\int_0^{\hpk-s}(\hpk-s-r) T_0(\hpk)T_0(r)D_0 S(\hpk)By\dd r\dd s.
\end{align*}
We take the norm here and, using Remark \ref{rem:fracpowbdd}, estimate trivially:
\begin{align}\label{eq:I3}
 \|I_3\|&=\Bigl\|\int_{0}^{h}(-A_0)^{1-\gamma}\int_0^{\hpk-s}(\hpk-s-r) T_0(\hpk)T_0(r)(-A_0)^{\gamma}D_0 S(\hpk)By\dd r\dd s\Bigr\|\\
 \notag &\le C_4 \|(-A_0)^{1-\gamma}T_0(\hpk)\|\cdot \|(-A_0)^{\gamma}D_0 \|\cdot \|By\| \int_{0}^{h}\Bigl|\int_0^{\hpk-s} |\hpk-s-r|\dd r\Bigr| \dd s\\
 \notag&=C_5 \frac{h^3}{h^{1-\gamma}}\|By\|=C_5 h^{2+\gamma}\|By\|.
\end{align}
The proof of the lemma is now finished by putting together the estimates for $I_1$, $I_2$ and $I_3$.
\end{proof}

\begin{theorem}[Convergence of the Strang splitting]\label{thm:strconv}
	Let $A_0$ and $B$ be the generator of the strongly continuous semigroups $(T_0(t))_{t\ge0}$ and $(S(t))_{t\ge0}$, respectively. Suppose Hypotheses \ref{hyp:boundary}, \ref{hyp:inv}, \eqref{eq:Qtb}, \ref{hyp:sec} and also
 \ref{hyp:fracpow}, i.e., that $\rg(D_0)\subseteq\dom((-A_0)^\gamma)$ for some $\gamma\in[0,1]$.
For each $\tmax>0$ there is $C\ge 0$ such that for every $n\in \NN$, $y\in \dom(B^3)$ and $t\in [0,\tmax]$ we have
\begin{equation*}
\Bigl\|V_n^\text{\Str}(\tfrac tn)y +\int_{0}^{t} T_0(t-s)D_0 S(s)By\dd s\Bigr\|\le C\frac{t^{1+\gamma}\log(n)}{n^{1+\gamma}}(\|By\|+\|B^2 y\|+\|B^3 y\|).
\end{equation*}
\end{theorem}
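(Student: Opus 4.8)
The proof runs entirely parallel to that of Theorem~\ref{thm:globlieA}, the Lie local error estimate (Lemma~\ref{lem:loclieA}) being replaced by the sharper Strang one (Lemma~\ref{lem:strlem}). Write $\tau=t/n$. Starting from the representation \eqref{eq:StrV} of $V_n^\text{\Str}(\tau)y$ — after commuting $B$ past the $S$-factors as in Section~\ref{sec:splittingBCs} — and splitting $\int_0^t=\sum_{j=0}^{n-1}\int_{j\tau}^{(j+1)\tau}$, I would write the quantity to be estimated as $-\sum_{j=0}^{n-1}R_j$ with local defects
\[
R_j:=\tau\,T_0\bigl((n-j-\tfrac12)\tau\bigr)D_0S\bigl((j+\tfrac12)\tau\bigr)By-\int_{j\tau}^{(j+1)\tau}T_0(t-s)D_0S(s)By\dd s .
\]

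The next step is to factor $T_0\bigl(t-(j+1)\tau\bigr)=T_0\bigl((n-1-j)\tau\bigr)$ out of both terms of $R_j$, substitute $s=j\tau+\sigma$, and move the commuting factor $S(j\tau)$ onto the datum, writing $y_j:=S(j\tau)y\in\dom(B^3)$ with $\|B^ky_j\|\le M\|B^ky\|$ for $k=1,2,3$ and $j\tau\le\tmax$ (growth bound of $(S(t))_{t\ge0}$). The bracket that remains is, up to sign, exactly the expression controlled by Lemma~\ref{lem:strlem} once one also pulls $A_0^{-1}$ (which commutes with the $T_0$-factors) in front of $D_0$; the compensating prefactor $A_0T_0\bigl((n-1-j)\tau\bigr)$ is, for $j\le n-2$, estimated by analyticity, $\|A_0T_0\bigl((n-1-j)\tau\bigr)\|\le C/\bigl((n-1-j)\tau\bigr)$ (Remark~\ref{rem:fracpowbdd} with $\alpha=1$). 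Combining this with the $\tau^{2+\gamma}$-bound of Lemma~\ref{lem:strlem} applied to $y_j$ gives, for $0\le j\le n-2$,
\[
\|R_j\|\le\frac{C}{(n-1-j)\tau}\cdot C'\tau^{2+\gamma}\bigl(\|By\|+\|B^2y\|+\|B^3y\|\bigr)=\frac{C''\,t^{1+\gamma}}{n^{1+\gamma}(n-1-j)}\bigl(\|By\|+\|B^2y\|+\|B^3y\|\bigr).
\]

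The index $j=n-1$ is the only delicate point, handled exactly as in the Lie case: there $t-(j+1)\tau=0$, the analytic smoothing of $A_0T_0(\cdot)$ is unavailable, and Lemma~\ref{lem:strlem} (which carries an $A_0^{-1}$) cannot be invoked. Instead one keeps $D_0$ intact and applies the weaker, order-$h^{1+\gamma}$ estimate of Lemma~\ref{lem:D0fracpow} with $h=\tau$, $s_0=s_1=\hpk$ and datum $y_{n-1}=S((n-1)\tau)y\in\dom(B^2)$, obtaining $\|R_{n-1}\|\le C\tau^{1+\gamma}(\|By_{n-1}\|+\|B^2y_{n-1}\|)\le C'\,t^{1+\gamma}n^{-(1+\gamma)}(\|By\|+\|B^2y\|)$. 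Summing, $\sum_{j=0}^{n-2}\|R_j\|+\|R_{n-1}\|\le C\,t^{1+\gamma}n^{-(1+\gamma)}\bigl(1+\sum_{k=1}^{n-1}\tfrac1k\bigr)\bigl(\|By\|+\|B^2y\|+\|B^3y\|\bigr)$, and $\sum_{k=1}^{n-1}\tfrac1k\le1+\log n$ yields the asserted bound. The only genuine obstacle is this isolated endpoint term: the loss of analytic smoothing forces the use of the suboptimal $h^{1+\gamma}$ local estimate there, but since a single summand is affected it still contributes only $O(\tau^{1+\gamma})=O(t^{1+\gamma}n^{-(1+\gamma)})$, of the required order; everything else is routine telescoping and Riemann-sum bookkeeping built on Lemmas~\ref{lem:strlem} and \ref{lem:D0fracpow}.
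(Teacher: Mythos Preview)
Your proposal is correct and follows essentially the same route as the paper: both decompose the global error into local defects $R_j$, factor out $T_0\bigl((n-1-j)\tau\bigr)$, treat $j\le n-2$ via the analytic bound $\|A_0T_0((n-1-j)\tau)\|\le C/((n-1-j)\tau)$ combined with Lemma~\ref{lem:strlem}, and isolate the endpoint $j=n-1$ using Lemma~\ref{lem:D0fracpow} with $s_0=s_1=\tfrac\tau2$, then sum the harmonic series to produce the $\log n$ factor.
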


\begin{proof}
Set $\tpn:=\frac tn$.
Recall from \eqref{eq:StrV} that for $y\in \dom(B)$ we have
\begin{align*}
	V_n^\text{\Str}(\tpn)y = - \tpn\sum\limits_{j=0}^{n-1}
	T_0((n-j-\tfrac 12)\tpn)D_0S((j+\tfrac 12)\tpn)By,
\end{align*}
so that
\begin{align}
 \notag \int_{0}^{t} &T_0(t-s)D_0 S(s)By\dd s+V_n^\text{\Str}(\tpn)y \\
 \notag &\quad = \sum\limits_{j=0}^{n-1}\int_{j\tpn}^{(j+1)\tpn }T_0(t-s)D_0 S(s)By-T_0((n-j-\tfrac 12)\tpn)D_0S((j+\tfrac 12)\tpn)By\dd s\\
\label{eq:StrV2} &\quad=\sum\limits_{j=0}^{n-1}T_0((n-j-1)\tpn)\int_{0}^{\tpn }T_0(\tpn-s)D_0 S(s)S(j\tpn)By\\
\notag&\qquad\qquad\qquad\qquad\qquad\qquad\qquad\qquad-T_0(\tpnh)D_0S(\tpnh)S(j\tpn)By\dd s.
\end{align}
We first consider the term for $j=n-1$. 
By Lemma~\ref{lem:D0fracpow}, with $h=\tau$ and $s_0=s_1=\tpnh$ we have that 
\begin{align*}
 &\Bigl\|\int_{0}^{\tpn }T_0(\tpn-s)D_0 S(s)S((n-1)\tpn)By-T_0(\tpnh)D_0S(\tpnh)S((n-1)\tpn)By\dd s\Bigr\|\\ 
 &\qquad\le C_1 \tpn^{1+\gamma}(\|BS((n-1)\tpn)y\|+\|B^2S((n-1)\tpn)y\|)\\
 &\qquad\le C_2\tpn^{1+\gamma}(\|By\|+\|B^2y\|)
\end{align*}
for $t\in [0,\tmax]$.
Next we consider the summands in \eqref{eq:StrV2} for $j\in \{0,\ldots,n-2\}$. 
In these cases we can write
\begin{equation*}
\begin{split}
 &\Bigl\|T_0\bigl((n-j-1)\tpn\bigr)\int_{0}^{\tpn }T_0(\tpn-s)D_0 S(s)S(j\tpn)By-T_0(\tpnh)D_0S(\tpnh)S(j\tpn)By\dd s\Bigr\|\\
 &\quad=\|A_0T_0\bigl((n-j-1)\tpn\bigr)\int_{0}^{\tpn }T_0(\tpn-s)A_0^{-1}D_0 S(s)S(j\tpn)By\\
 &\quad\quad\quad -T_0(\tpnh)A_0^{-1}D_0S(\tpnh)S(j\tpn)By\dd s\Bigr\|\\
 &\quad\le\|A_0T_0\bigl((n-j-1)\tpn\bigr)\|\cdot \Bigl\|\int_{0}^{\tpn }T_0(\tpn-s)A_0^{-1}D_0 S(s)S(j\tpn)By\\
 &\quad\quad\quad -T_0(\tpnh)A_0^{-1}D_0S(\tpnh)S(j\tpn)By\dd s\Bigr\|, 
 \end{split}
\end{equation*} 
and by Lemma~\ref{lem:strlem} and Remark~\ref{rem:fracpowbdd} we can continue as follows:
 \begin{equation*}
\begin{split}
 &\quad\le \frac{C_3}{(n-j-1)\tpn}\tpn^{2+\gamma}\bigl(\|BS(j\tpn)y\|+\|B^2S(j\tpn)y\|+\|B^3 S(j\tpn)y\|\bigr)\\
 &\quad\le \frac{C_4}{(n-j-1)\tpn}\tpn^{2+\gamma}\bigl(\|By\|+\|B^2y\|+\|B^3 y\|\bigr)
\end{split}
\end{equation*}
for constants $C_3,C_4$ independent of $y$, $n$ and $t\in[0,\tmax]$. 
Summing up these estimates we arrive at
\begin{equation*}
\begin{split}
 &\Bigl\|\int_{0}^{t} T_0(t-s)D_0 S(s)By\dd s+V_n^\text{\Str}(\tpn)y \Bigr\|\\
 &\quad\le C_2 \tpn^{1+\gamma}\bigl(\|By\|+\|B^2y\|\bigr)+\sum\limits_{j=0}^{n-2} \frac{C_4}{(n-j-1)\tpn}\tpn^{2+\gamma}\bigl(\|By\|+\|B^2y\|+\|B^3 y\|\bigr)\\
 &\quad\le \Bigl(C_2\frac{t^{1+\gamma}}{n^{1+\gamma}}+\frac{C_4t^{2+\gamma}}{n^{2+\gamma}}\sum\limits_{k=1}^{n-1}\frac{n}{tk}\Bigr)\cdot\bigl (\|By\|+\|B^2y\|+\|B^3 y\|\bigr)\\
 &\quad \le C\frac{t^{1+\gamma}\log(n)}{n^{1+\gamma}}\bigl(\|By\|+\|B^2y\|+\|B^3 y\|\bigr),
\end{split}
\end{equation*}
with an appropriate constant $C\ge 0$.
The proof is complete.
\end{proof}


Finally, let us turn to the weighted splittings. For any $\Theta\in [0,1]$ the weighted splitting possess at least the convergence properties as the Lie splitting. For the case $\Theta=1/2$ one can prove even more.
\begin{lemma}[Local error of the symmetrically weighted Splitting]\label{lem:weilem}
	Let $A_0$ and $B$ be the generator of the strongly continuous semigroups $(T_0(t))_{t\ge0}$ and $(S(t))_{t\ge0}$, respectively. Suppose Hypotheses \ref{hyp:boundary}, \ref{hyp:inv}, \eqref{eq:Qtb}, \ref{hyp:sec} and also
 \ref{hyp:fracpow}, i.e., that $\rg(D_0)\subseteq\dom((-A_0)^\gamma)$ for some $\gamma\in[0,1]$.
For every $\tmax>0$ there is $C\ge 0$ such that for every $h\in [0,\tmax]$ and for every $y\in \dom(B^3)$ we have
\begin{multline*}
 \Bigl\|\int_{0}^{h} T_0(h-s)A_0^{-1}D_0 S(s)By\dd s-\frac12\Bigl(h T_0(h)A_0^{-1}D_0BS(h)y+hA_0^{-1}D_0By\Bigr)\Bigr\|\\
 \quad\le C h^{2+\gamma}(\|By\|+\|B^2 y\|+\|B^3 y\|).
\end{multline*}
\end{lemma}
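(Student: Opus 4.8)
The plan is to mimic the proof of Lemma~\ref{lem:strlem}, exploiting the symmetry of the midpoint-type quadrature underlying the symmetrically weighted splitting. First I would write
\[
 \int_{0}^{h} T_0(h-s)A_0^{-1}D_0 S(s)By\dd s-\tfrac12\bigl(h T_0(h)A_0^{-1}D_0BS(h)y+hA_0^{-1}D_0By\bigr)
\]
as the integral over $[0,h]$ of the integrand minus $\tfrac12$ of the sum of its two endpoint values, i.e.\
\[
 \int_0^h\Bigl(T_0(h-s)A_0^{-1}D_0 S(s)By-\tfrac12 T_0(h)A_0^{-1}D_0 S(h)By-\tfrac12 T_0(0)A_0^{-1}D_0 S(0)By\Bigr)\dd s.
\]
Then I would insert, for each of the two semigroup factors, the second-order Taylor expansion around the \emph{midpoint} $s=\hpk$: writing $g(s)=T_0(h-s)A_0^{-1}D_0 S(s)By$, one has $g(0)+g(h)=2g(\hpk)+\tfrac{h^2}{4}g''(\hpk)+\text{(third order remainder)}$ and $\int_0^h g(s)\dd s=hg(\hpk)+\tfrac{h^3}{24}g''(\hpk)+\text{(third order remainder)}$, so the leading $g(\hpk)$ terms cancel and—crucially, as in the treatment of $I_2$—the first-order term $g'(\hpk)$ integrates to zero by antisymmetry about the midpoint. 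What survives is a combination of the curvature terms, which I would handle by splitting $g''$ according to which factor is differentiated twice versus the mixed first–first term, exactly paralleling the $I_1$, $I_2$, $I_3$ bookkeeping.

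Concretely, I would decompose the integrand after subtracting midpoint values as a sum of three pieces, by the now-standard trick of writing $T_0(h-s)A_0^{-1}D_0 S(s) - (\text{midpoint stuff}) = \bigl(T_0(h-s)-T_0(\hpk)\bigr)A_0^{-1}D_0\bigl(S(s)-S(\hpk)\bigr) + T_0(\hpk)A_0^{-1}D_0\bigl(S(s)-S(\hpk)\bigr) + \bigl(T_0(h-s)-T_0(\hpk)\bigr)A_0^{-1}D_0 S(\hpk)$, and similarly expanding the two endpoint correction terms so everything is referenced to $\hpk$. For the ``mixed'' piece (both differences present) I would insert first-order Taylor remainders $\int_{\hpk}^s S(r)B^2y\dd r$ and the analogue for $T_0$, use $A_0 A_0^{-1}D_0 = D_0$ and boundedness of $D_0$, and obtain an $\mathrm{O}(h^3\|B^2y\|)$ bound just as in \eqref{eq:I1}. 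For the pure-$S$ piece I would use the \emph{third}-order Taylor formula for $S(s)By$ about $\hpk$; the zeroth- and first-order terms integrate against $\bigl(\,\cdot\,-\tfrac12(\text{endpoints})\bigr)$ to zero by the midpoint symmetry, leaving the $\int_0^{s-\hpk}(s-\hpk-r)S(\hpk+r)B^3y\dd r$ remainder, hence an $\mathrm{O}(h^3\|B^3y\|)$ bound as for $I_2$. For the pure-$T_0$ piece—where the order reduction lives—I would use analyticity to write $T_0(\hpk)z\in\dom(A_0^2)$ with $z=D_0S(\hpk)By$, expand $T_0(h-s)A_0^{-1}z$ to second order about $\hpk$, again kill the zeroth and first order terms by symmetry, and bound the surviving $A_0\!\int_0^{\hpk-s}(\hpk-s-r)T_0(\hpk)T_0(r)z\dd r$ term using $\|(-A_0)^{1-\gamma}T_0(\hpk)\|\le C h^{\gamma-1}$ and $\|(-A_0)^\gamma D_0\|<\infty$ (Remarks~\ref{rem:fracpowbdd}, \ref{rem:fracpowbdd2}), giving $\mathrm{O}(h^{2+\gamma}\|By\|)$ exactly as in \eqref{eq:I3}.

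The main obstacle—and the point requiring genuine care rather than routine imitation—is the combinatorial/symmetry accounting: unlike the Strang case, here the quadrature rule is the two-point rule $\tfrac h2(f(0)+f(h))$ rather than the midpoint rule $hf(\hpk)$, so I must verify that after re-centering everything at $\hpk$ the \emph{same} antisymmetric cancellations occur, namely that $\int_0^h(\hpk-s)\,\psi\dd s=0$ for the relevant constant-in-$s$ vectors $\psi$ and that the endpoint corrections contribute only at curvature order. One must also be slightly careful that the weighted splitting's $V^{\wgh}$ with $\Theta=1/2$ genuinely produces the quadrature node pattern $\{0,h\}$ (from \eqref{eq:Vwgh}, the $\Theta$ term gives $T_0(h)D_0BS(h)$ and the $(1-\Theta)$ term gives $D_0B$, i.e.\ evaluation at the right and left endpoints respectively), which is why the factor $\tfrac12$ and the two endpoint terms appear as they do. Once this symmetry is confirmed, combining the three $\mathrm{O}(h^{2+\gamma})$ (or better) estimates yields the claimed bound, with $C$ depending only on $\tmax$, $\|D_0\|$, $\|(-A_0)^\gamma D_0\|$, and the growth bounds of the two semigroups.
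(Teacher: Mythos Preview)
Your midpoint-centered strategy is correct and will produce the stated bound, but it is organised differently from the paper's proof. You reduce everything to the Strang template by subtracting $hg(\hpk)$ from both the integral and the trapezoidal sum $\tfrac h2(g(0)+g(h))$, then split $g(s)-g(\hpk)$ into the familiar mixed / pure-$S$ / pure-$T_0$ pieces and treat the endpoint corrections alongside the integral. The paper instead keeps the endpoints as reference: it writes $2\int_0^h g(s)\dd s-h(g(0)+g(h))$ and decomposes via the combinations $2S(s)-S(h)-\Id$ and $2T_0(h-s)-T_0(h)-\Id$, obtaining four terms $I_1,\dots,I_4$; the first-order cancellation then comes from $\int_0^h(2s-h)\dd s=0$ (exactness of the trapezoidal rule on affine functions) rather than from midpoint antisymmetry. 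Both routes rest on the same analytic ingredients---second-order Taylor expansion of $S(s)By$ in the $B$-direction and the fractional-power bound $\|(-A_0)^{1-\gamma}T_0(t)\|\le Ct^{\gamma-1}$ in the $T_0$-direction---so neither is stronger. Your approach recycles Lemma~\ref{lem:strlem} almost verbatim, at the price of handling the two endpoint remainders separately; note in particular that at $s=h$ the second-order remainder becomes $\int_0^{\hpk} r\,A_0T_0(r)z\dd r$, which cannot be factored through $T_0(\hpk)$, so you must bound $\|(-A_0)^{1-\gamma}T_0(r)\|$ directly for small $r$ (still yielding $O(h^{1+\gamma})$, hence $O(h^{2+\gamma})$ after the factor $\tfrac h2$). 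The paper's four-term decomposition absorbs the endpoint structure from the outset and is marginally more compact.
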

\begin{proof}
We have that
\begin{align*}
&2\int_{0}^{h} T_0(h-s)A_0^{-1}D_0 S(s)By\dd s-\Bigl(h T_0(h)A_0^{-1}D_0BS(h)y+hA_0^{-1}D_0By\Bigr)=\\
&\quad =\int_{0}^{h} T_0(h-s)A_0^{-1}D_0 S(s)By- T_0(h)A_0^{-1}D_0BS(h)y\dd s\\ 
&\qquad +\int_{0}^{h} T_0(h-s)A_0^{-1}D_0 S(s)By -A_0^{-1}D_0By\dd s\\
&\quad=\int_{0}^{h}T_0(h)A_0^{-1}D_0(2S(s)- S(h)-\Id)By\dd s\\
&\qquad+\int_{0}^{h} (2T_0(h-s)-T_0(h)-\Id)A_0^{-1}D_0 S(h)By\dd s\\
&\qquad+\int_{0}^{h} (2T_0(h-s)-T_0(h)-\Id)A_0^{-1}D_0 (S(s)-S(h))By\dd s\\
&\qquad+\int_0^h (\Id-T_0(h))A_0^{-1}D_0(S(s)-\Id)By\dd s=I_1+I_2+I_3+I_4,
\end{align*}
where $I_j$ denotes the respective term on the right-hand side in order of occurrence. 
We first consider the term $I_1$. Since $y\in \dom(B^3)$ for any $t>0$ we have the Taylor expansion
\begin{align*}
 S(t)By=By+tB^2y+\int_0^t (t-r)S(r)B^3y\dd r.
\end{align*}
Substituting this into the  formula for $I_1$ we obtain that
\begin{align*}
 I_1&=T_0(h)A_0^{-1}D_0\int_0^h \Bigl(2By+2sB^2y+2\int_0^s (s-r)S(r)B^3y\dd r- By-hB^2y\\
 &\qquad -\int_0^h (h-r)S(r)B^3 y\dd r-By\Bigr)\dd s\\
 &=T_0(h)A_0^{-1}D_0\int_0^h \Bigl[(2s-h)B^2y+ 2\int_0^s(s-r)S(r)B^3y\dd r\\
 &\qquad- \int_0^h(h-r)S(r)B^3 y\dd r\Bigr]\dd s\\
 &=T_0(h)A_0^{-1}D_0\int_0^h \Bigl[2\int_0^s(s-r)S(r)B^3y\dd r-\int_0^h(h-r)S(r)B^3 y\dd r\Bigr]\dd s,
\end{align*}
since the integral of the first term is $0$.
Whence we conclude
\begin{align*}
 \|I_1\|\le C_3 h^3 \|B^3y\|.
\end{align*}
The term $I_2$ can be estimated similarly, and we obtain
\begin{align*}
 \|I_2\|&\le C_1 h^{2+\gamma}(\|By\|+\|B^2y\|),
\end{align*}
cf.\ \eqref{eq:I3} in the estimation of the term $I_3$ in Lemma~\ref{lem:strlem}.
Finally, for the terms $I_3$  and $I_4$ we have
\begin{align*}
 \|I_3\|,\:\|I_4\|&\le C_2 h^{3}\|B^2y\|,
\end{align*}
cf.\ \eqref{eq:I1} in the estimation of the term $I_1$ in Lemma~\ref{lem:strlem}. 
Putting the estimates for the terms $I_1$, $I_2$, $I_3$, $I_4$ together 
finishes the proof of the lemma.
\end{proof}

Based on Lemma~\ref{lem:weilem} we immediately obtain the following error estimate for the symmetrically weighted splitting, the proof is analogous to the one of Theorem~\ref{thm:strconv}.

\begin{theorem}[Convergence of the symmetrically weighted splitting]\label{thm:symwghconv}
	Let $A_0$ and $B$ be the generator of the strongly continuous semigroups $(T_0(t))_{t\ge0}$ and $(S(t))_{t\ge0}$, respectively. Suppose Hypotheses \ref{hyp:boundary}, \ref{hyp:inv}, \eqref{eq:Qtb}, \ref{hyp:sec} and also
 \ref{hyp:fracpow}, i.e., that $\rg(D_0)\subseteq\dom((-A_0)^\gamma)$ for some $\gamma\in[0,1]$.
For each $\tmax>0$ there is $C\ge 0$ such that for every $n\in \NN$, $y\in \dom(B^3)$ and $t\in [0,\tmax]$ we have
\begin{equation*}
 \Bigl\|V_n^\text{\wgh}(\tfrac tn)y +\int_{0}^{t} T_0(t-s)D_0 S(s)By\dd s\Bigr\|
 \le C\frac{t^{1+\gamma}\log(n)}{n^{1+\gamma}}(\|By\|+\|B^2 y\|+\|B^3 y\|).
\end{equation*}
\end{theorem}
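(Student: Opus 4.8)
The plan is to mimic the proof of Theorem~\ref{thm:strconv} almost verbatim, with Lemma~\ref{lem:weilem} playing the role that Lemma~\ref{lem:strlem} plays there, and Lemma~\ref{lem:D0fracpow} handling the single boundary term. Fix $\Theta=\tfrac12$ and set $\tpn:=\tfrac tn$. Since $B$ commutes with $(S(t))_{t\ge0}$ on $\dom(B)$, the weighted increment can be written as
\begin{equation*}
V_n^\text{\wgh}(\tpn)y=-\tfrac\tpn2\sum_{j=0}^{n-1}\Bigl(T_0\bigl((n-j)\tpn\bigr)D_0BS\bigl((j+1)\tpn\bigr)y+T_0\bigl((n-j-1)\tpn\bigr)D_0BS(j\tpn)y\Bigr),
\end{equation*}
while $\int_0^t T_0(t-s)D_0S(s)By\dd s=\sum_{j=0}^{n-1}T_0\bigl((n-j-1)\tpn\bigr)\int_0^\tpn T_0(\tpn-s)D_0S(s)S(j\tpn)By\dd s$. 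Subtracting and writing $y_j:=S(j\tpn)y\in\dom(B^3)$, the error takes the form $\sum_{j=0}^{n-1}T_0\bigl((n-j-1)\tpn\bigr)E_j$, where
\begin{equation*}
E_j=\int_0^\tpn T_0(\tpn-s)D_0S(s)By_j\dd s-\tfrac\tpn2\Bigl(T_0(\tpn)D_0BS(\tpn)y_j+D_0By_j\Bigr).
\end{equation*}

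For $j\in\{0,\dots,n-2\}$ I would insert $A_0^{-1}D_0$ for $D_0$ and factor out $A_0T_0\bigl((n-j-1)\tpn\bigr)$, whose norm is $\le C_1/\bigl((n-j-1)\tpn\bigr)$ by analyticity (Hypothesis~\ref{hyp:sec} and Remark~\ref{rem:fracpowbdd}); Lemma~\ref{lem:weilem} with $h=\tpn$ then bounds the remaining bracket by $C_2\tpn^{2+\gamma}\bigl(\|By_j\|+\|B^2y_j\|+\|B^3y_j\|\bigr)$, and the shift is absorbed via the uniform bound $\|B^ky_j\|=\|S(j\tpn)B^ky\|\le M\|B^ky\|$ valid for $0\le j\tpn\le\tmax$. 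Hence the $j$-th term contributes at most $\frac{C_3}{(n-j-1)\tpn}\tpn^{2+\gamma}\bigl(\|By\|+\|B^2y\|+\|B^3y\|\bigr)$.

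The only delicate point, exactly as in the proof of Theorem~\ref{thm:strconv}, is the last summand $j=n-1$, where $T_0\bigl((n-j-1)\tpn\bigr)=\Id$ and no analytic smoothing of the outer factor is available. Here I would split the symmetric average into two midpoint-type differences,
\begin{equation*}
\begin{split}
E_{n-1}&=\tfrac12\Bigl(\int_0^\tpn T_0(\tpn-s)D_0S(s)By_{n-1}\dd s-\tpn T_0(\tpn)D_0S(\tpn)By_{n-1}\Bigr)\\
&\quad+\tfrac12\Bigl(\int_0^\tpn T_0(\tpn-s)D_0S(s)By_{n-1}\dd s-\tpn D_0By_{n-1}\Bigr),
\end{split}
\end{equation*}
and apply Lemma~\ref{lem:D0fracpow} to each piece — with $h=\tpn$ and parameters $(s_0,s_1)=(0,\tpn)$ for the first, $(s_0,s_1)=(\tpn,0)$ for the second — to obtain $\|E_{n-1}\|\le C_4\tpn^{1+\gamma}\bigl(\|By\|+\|B^2y\|\bigr)$. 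This single term costs only $O(\tpn^{1+\gamma})=O\bigl(t^{1+\gamma}n^{-(1+\gamma)}\bigr)$, already within the target bound.

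It then remains to sum: the terms with $j\le n-2$ give $\sum_{j=0}^{n-2}\frac{C_3}{(n-j-1)\tpn}\tpn^{2+\gamma}=C_3\tpn^{1+\gamma}\sum_{k=1}^{n-1}\tfrac1k\le C_3\tpn^{1+\gamma}(1+\log n)$, which together with the $j=n-1$ contribution yields the asserted estimate $C\,t^{1+\gamma}\log(n)\,n^{-(1+\gamma)}\bigl(\|By\|+\|B^2y\|+\|B^3y\|\bigr)$. The whole argument is thus a routine adaptation; the one place requiring genuine care is the boundary summand, where Lemma~\ref{lem:weilem} must be replaced by the weaker Lemma~\ref{lem:D0fracpow} after splitting the symmetric average into two one-sided differences.
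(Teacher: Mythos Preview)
Your proof is correct and follows exactly the approach the paper intends: it says only that the argument is ``analogous to the one of Theorem~\ref{thm:strconv}'' with Lemma~\ref{lem:weilem} in place of Lemma~\ref{lem:strlem}, and you have filled in precisely those details. The one genuine adaptation you had to make---splitting the $j=n-1$ term into two one-sided differences so that Lemma~\ref{lem:D0fracpow} applies with $(s_0,s_1)=(0,\tpn)$ and $(s_0,s_1)=(\tpn,0)$ rather than a single midpoint choice---is handled correctly.
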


\section{Numerical examples}
\label{sec:num}

We present two examples to illustrate the theoretical results concerning the accuracy of the splitting procedures applied to problem \eqref{eq:main0}. In both cases, we analyse the order of the global error by solving the problems using various values of the (splitting) time step $\tau>0$. Fitting a straight line to these data in the logarithmic scale, the resulting slope yields the computational (numerical) order of the method. This is then to be compared to the theoretical order obtained in Theorems \ref{thm:globlieA}, \ref{thm:strconv}, and \ref{thm:symwghconv} for the Lie \eqref{eq:lie}, Strang \eqref{eq:str}, and weighted splittings \eqref{eq:wgh}, respectively.

For such illustration purposes the simplest non-trivial example serves as a good basis. In fact, we consider the heat equation on the domain $(0,\beta)$, $\beta>0$ and a system of ordinary differential equations on the boundary for the unknown function $w\colon[0,\infty)\times[0,\beta]\to\RR$:
\begin{equation}\label{eq:example0pde}
\left\{
\begin{aligned}
\partial_tw(t,x) &= c\partial_{xx}w(t,x) && \text{ for } t>0,\:x\in(0,\beta), \\
w(0,x) &= w_0(x)&& \text{ for } x\in (0,\beta), \\
\dot w(t,0) &=b_{11}w(t,0) + b_{12}w(t,\beta)&& \text{ for } t\ge 0, \\
\dot w(t,\beta) &= b_{21}w(t,0) + b_{22}w(t,\beta)&& \text{ for } t\ge 0.
\end{aligned}
\right.
\end{equation}
As explained in Section \ref{sec:abstractBCs} this equation can be casted in the form of \eqref{eq:main0}. The occurring spaces and operators are as follows (cf.{} Example \ref{examp:beltrami}):
\begin{itemize}
	\item $E=\Ell^2(0,\beta)$, $F=\CC^2$.
	\item $A_m=c\Delta$, and $\Delta$ is the distributional Laplace operator on $\dom(A_m)=\{f\in \He^{1/2}(0,\beta)\st f''\in \Ell^2(0,\beta)\}$.
	\item $L$ is the Dirichlet trace on $\{0,\beta\}$. 
	\item $A_0$ is the scalar multiple of the Dirichlet Laplace operator generating the Dirichlet heat semigroup $(T_0(t))_{t\geq 0}$.
	\item $D_0:\CC^2\to H^{1/2}(0,\beta)$, $D_0(a,b)(r)=br/\beta+a(\beta-r)/\beta$.
	\item The operator $B$ is given by \[B=\begin{pmatrix}b_{11}&b_{12}\\b_{21}&b_{22}\end{pmatrix}\in \CC^{2\times 2}\]
	generating the semigroup $(S(t))_{t\geq 0}=(\ee^{tB})_{t\geq 0}$.
\end{itemize}

\subsection*{Implementation}
Many ingredients, needed for the splitting methods, can be calculated explicitly, such as $D_0$, $(S(t))_{t\geq 0}$. For the implementation we used the corresponding built-in \textsc{matlab} functions. For the solution of the heat equation with \textit{homogeneous} Dirichlet boundary condition, i.e., for determining $y(t)=T_0(t)y(0)$, it is plausible to apply an appropriate spectral method, in this case the Fourier method and use expansion with respect to the orthogonal basis of eigenfunctions of $A_0$. This is implemented by using \textsc{matlab}'s built in \texttt{fft} function (Fast Fourier Transform). To this end we choose an integer $N_x$ and split the space interval $[0,\beta]$ into $N_x$ pieces of equal length, and the Fourier (sine) series is also truncated at $N_x$. We compute the values $u(t_n)$ only at grid points $x_j=j\beta/N_x$ for all $j=1,\ldots,N_x$ and for all time levels, $t_n=n\tau\in[0,\tmax]$, $n=1,\ldots,N_t:=\tmax/\tau$ for some $\tmax>0$ (such that $N_t$ is an integer). Similarly, the boundary values $v(t_n)$ are also computed only at the same time levels. For all $n=1,\ldots,N_t$, the numerical solution $\vecu_n^{N_x}\in\RR^{N_x}$ has then the elements $(\vecu_n^{N_x})_j$ being the approximation to $(\vecu(t_n))(x_j)$ for all $j=1,\ldots,N_x$.

The relative global error of the splittings is then computed as
\begin{equation}\label{eq:error}
\varepsilon(\tau):=\frac{\|\vecu(\tmax)-\vecu_{N_t}^{N_x}\|_2}{\|\vecu(\tmax)\|_2}
\end{equation}
with the Euclidean vector norm.
The results of Section \ref{sec:order} forecast the following behaviour (for fixed $\tmax>0$ and initial value)
\begin{align*}
	\text{for the sequential splitting}&\quad\varepsilon(\tau)=O(\tau|\log(\tau)|)\\
	\left.\begin{array}{ll}\text{for the symmetrically weighted}\\
	\text{and Strang splittings}\end{array}\right\}&\quad\varepsilon(\tau)=O(\tau^{1+\gamma}),
\end{align*}
where $\gamma\in [0,1/4)$ is as near to $1/4$ as we want, see Remark \ref{rem:fracpowbdd2}. For the computational orders of these methods we therefore expect 1.0 and 1.25, respectively. In the next two examples we test the methods against these expectations.

\subsection*{Exponential growth and decay on the boundary}
We consider the following specific problem:
\begin{equation}\label{eq:example1pde}
\left\{
\begin{aligned}
\partial_tw(t,x) &= \partial_{xx}w(t,x) && \text{ for } t>0,\:x\in(0,\pi), \\
w(0,x) &= \cos(\tfrac x2)+\sinh(x) && \text{ for } x\in[0,\pi], \\
\dot w(t,0) &= -\tfrac 14w(t,0) && \text{ for } t\ge 0, \\
\dot w(t,\pi) &= w(t,\pi) && \text{ for } t\ge 0.
\end{aligned}
\right.
\end{equation}
This corresponds to the choices $\beta=\pi$, $c=1$ and
\[
B=\begin{pmatrix}-\tfrac 14&0\\0&1\end{pmatrix}.
\]
For this test example, the exact solution \[   
w(t,x) = \ee^{-t/4}\cos(\tfrac x2)+\ee^{t}\sinh(x)\]
 is known for all $t\ge 0$ and $x\in[0,\pi]$, therefore, we have
\begin{align}
\label{eq:exact1u} (u(t))(x) &= \ee^{-t/4}\cos(\tfrac x2)+\ee^{t}\sinh(x) \quad\text{for}\quad x\in(0,\pi), \\
\label{eq:exact1v} v(t) &= \binom{\ee^{-t/4}}{\ee^{t}\sinh(\pi)}
\end{align}
for all $t\ge 0$. Based on this, Figure \ref{fig:exm1sol} shows the exact solution $u(t)$ at certain time levels, while in Figure \ref{fig:exm1bdry} the time evolution of the boundary values $v(t)$ is presented.

\begin{figure}[!ht]
\centering
\rotatebox{-90}{\includegraphics[width=6cm]{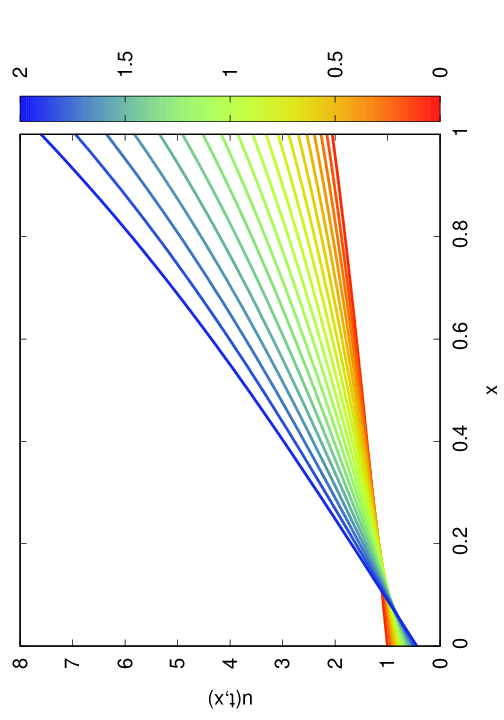}}
\caption{Exact solution $u(t)$ of Example 1 for time levels $t_n=n\cdot 0.1$, $n=1,\ldots,20$. Red lines correspond to the beginning and blue ones to the end. \label{fig:exm1sol}}
\end{figure}

\begin{figure}[!ht]
\centering
\rotatebox{-90}{\includegraphics[width=6cm]{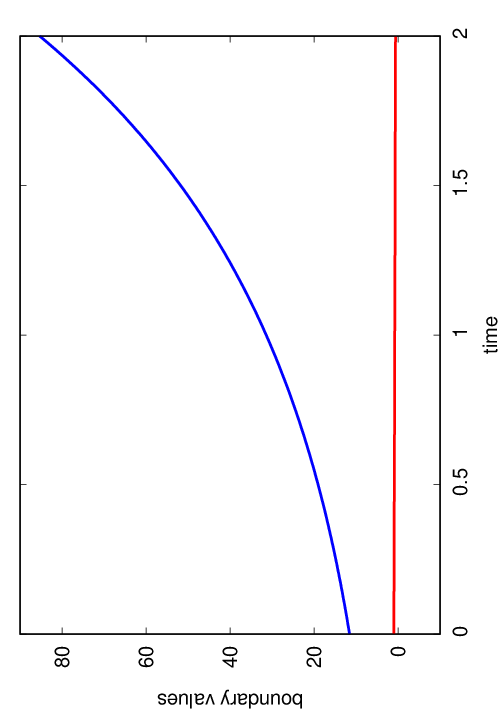}}
\caption{Exact solution $v(t)$ of Example 1 for all $t\in[0,2]$. The red line corresponds to the first coordinate function (i.e., the left boundary value), while the blue line to the second one (right boundary value). \label{fig:exm1bdry}}
\end{figure}

In Figure \ref{fig:exm1order}, the order plots of the splittings are shown: the error values $\log(\varepsilon(\tau))$ via several values of $\log(\tau)$. The slope of the lines corresponds to the approximation of the order (called computational order later on). For comparison purposes, we also included the lines with slope $1$ and $1.25$.

\begin{figure}[!ht]
\centering
\rotatebox{-90}{\includegraphics[width=6cm]{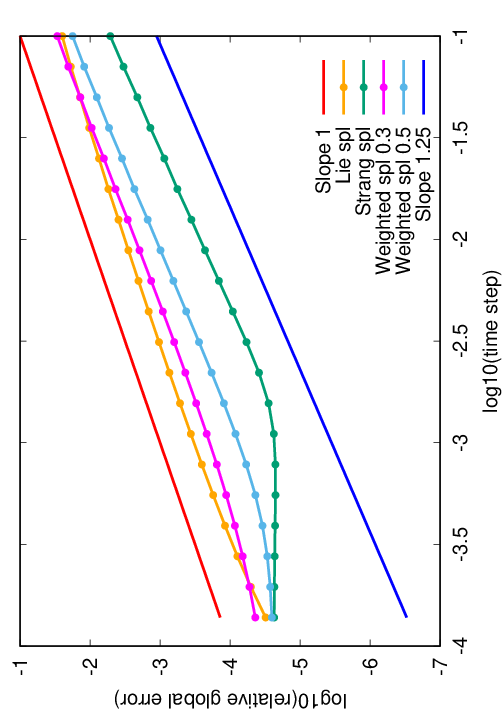}}
\caption{Order plots of the various splitting procedures for problem \eqref{eq:example1pde} with $N_x=32768$. \label{fig:exm1order}}
\end{figure}

One can see that for each splitting, there exists a value of $\log(\tau)$ below which the error no longer decreases. At that point the global error is  dominated by the error of the spatial discretisation (the implementation/truncation error of the Fourier method in this case) and not the splitting error anymore. When computing the order, we fit a straight line only to the relevant data. The resulting values of the slopes are listed in Table \ref{tab:exm1order}.

\begin{table}[!ht]
\centering
\caption{Computational orders of the splitting procedures computed as the slope of the fitted line for $N_x=32768$ in the case of problem \eqref{eq:example1pde}. 
 \label{tab:exm1order}}
{\footnotesize
\begin{tabular}{l|c|c|c|c}
splitting & Lie & Strang & Weighted $\Theta=0.3$ & Weighted $\Theta=0.5$ \\\hline
analytical order & $\sim1$ & $\sim1.25$ & $\sim1$ & $\sim1.25$ \\
computational order & $1.0004$ & $1.2405$ & $1.0549$ & $1.1646$
\end{tabular}}
\end{table}
Note that the asymmetrically weighted splitting is not studied in this paper analytically, but the numerical findings coincide with what one might expect: the same behaviour as for the Lie splitting. For each of the splitting methods, as one expects, the larger the $N_x$ values are, the smaller becomes the threshold where the discretisation error starts dominating the splitting error.

\subsection*{Harmonic oscillation on the boundary}

In the second example, we consider again the heat equation but this time we have a system of differential equations on the boundary describing the harmonic oscillator:
\begin{equation}\label{eq:example2pde}
\left\{
\begin{aligned}
\partial_tw(t,x) &= c_1\partial_{xx}w(t,x) && \text{ for } t>0,\:x\in(0,1), \\
w(0,x) &= c_2\ee^{-c_3(x-\frac 12)^2} && \text{ for } x\in(0,1), \\
\dot w(t,0) &= w(t,1) && \text{ for } t\ge 0 \text{ with } w(0,0)=1, \\
\dot w(t,1) &= -c_3w(t,0) && \text{ for } t\ge 0 \text{ with } w(0,1)=-c_4
\end{aligned}
\right.
\end{equation}
with some given constants $c_i>0, i=1,2,3,4$ and $\beta=1$.
We applied the same numerical method as described above. Note that the exact boundary function is
\begin{equation*}
v(t)=\binom{\cos(\sqrt{c_3}t)-\tfrac{c_4}{\sqrt{c_3}}\sin(\sqrt{c_3}t)}{-\sqrt{c_3}\sin(\sqrt{c_3}t)-c_4\cos(\sqrt{c_3}t)}.
\end{equation*}
 Since in this example the exact solution $u(t)$ is unknown, 
 we computed a reference solution instead, by using the same numerical method but with much larger $N_t$ value (but the same $N_x$ value). This choice is justified by the fact that, at this point, we are interested in the order of the splitting method only and not in the error of the entire numerical method (including spatial and temporal discretisations). Figure \ref{fig:exm2sol} shows the reference solution at certain time levels, and in Figure \ref{fig:exm2bdry} the time evolution of the boundary values are presented. For our numerical experiments, we chose the values $c_1=0.1, c_2=9, c_3=10, c_4=0.1$.

\begin{figure}[!ht]
\centering
\rotatebox{-90}{\includegraphics[width=6cm]{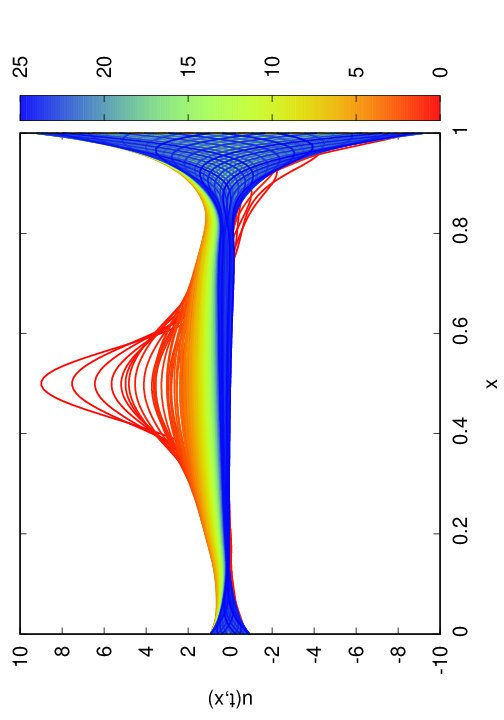}}
\caption{Exact solution $u(t)$ of Example 2 for time levels $t_n=n\cdot 0.1$, $n=1,\ldots,250$ with $N_x=128$. Red lines correspond to the beginning and blue ones to the end. \label{fig:exm2sol}}
\end{figure}

\begin{figure}[!ht]
\centering
\rotatebox{-90}{\includegraphics[width=6cm]{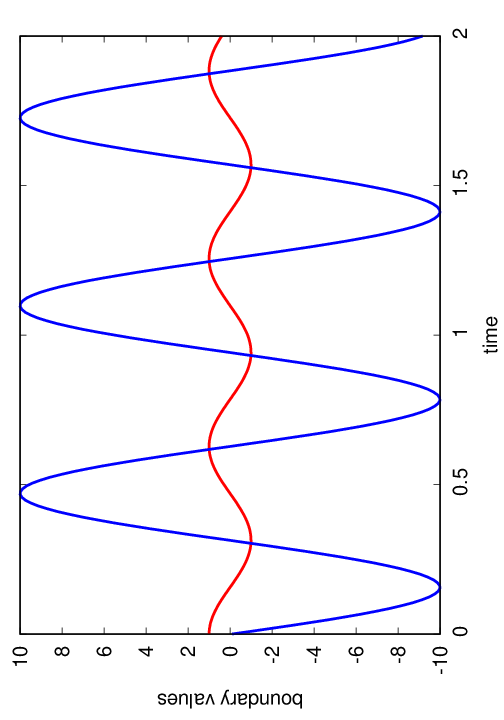}}
\caption{Exact solution $v(t)$ of Example 2 for all $t\in[0,2]$. The red line corresponds to the first coordinate function (i.e., the left boundary value), while the blue line to the second one (right boundary value). \label{fig:exm2bdry}}
\end{figure}

In Figure \ref{fig:exm2order}, the order plots of the splittings are presented for this example. As before, the slope of the lines corresponds to the approximation of the order. For comparison purposes, we included the lines with slope $1$, $1.25$ and $2$. One can see that the classical first-order splittings (Lie and weighted with $\Theta=0.3$) possess computational order $1$ as well. The symmetrically weighted splitting ($\Theta=0.5$) shows some oscillation after a while. The Strang splitting, however, behaves very well. For time step values with $\log(\tau)$ greater than approximately $-2.5$, its computational order is around $1.25$ as expected. For smaller time steps its order becomes $2$, which corresponds to the general order of the Strang splitting without the order reduction caused by the inhomogeneous boundary (the effect of the Dirichlet operator $D_0$).

\begin{figure}[!ht]
\centering
\rotatebox{-90}{\includegraphics[width=6cm]{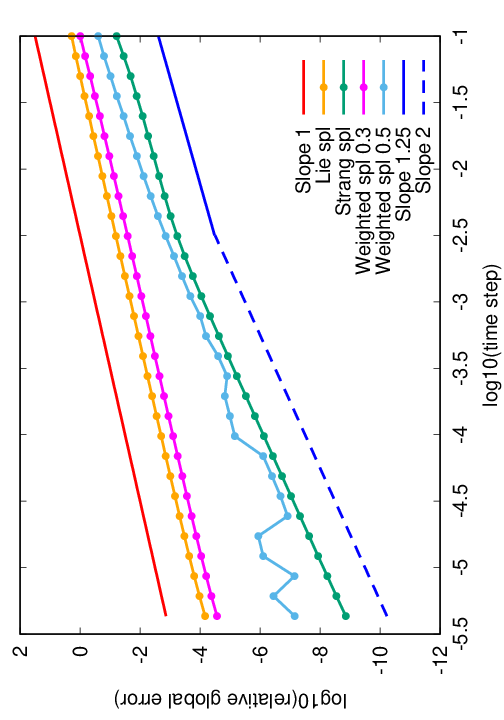}}
\caption{Order plots of the various splitting procedures for problem \eqref{eq:example1pde} with $N_x=128$. \label{fig:exm2order}}
\end{figure}

\begin{table}[!ht]
\centering
\caption{Computational orders of the splitting procedures computed as the slope of the fitted line for $N_x=128$ in the case of problem \eqref{eq:example2pde}. 
\label{tab:exm2order}}
{\footnotesize\tabcolsep0.5em
\begin{tabular}{l|c|c|c|c}
splitting & Lie & Strang & Weighted $\Theta=0.3$ & Weighted $\Theta=0.5$ \\\hline
analytical order& $\sim1$ & $\sim1.25$ & $\sim1$& $\sim1.25$ \\
computational order & $1.0100$ & $1.3056$ if $\log(\tau)>-2.5$ & $1.0256$ & $1.5765$ \\
&& $1.9812$ if $\log(\tau)<-2.5$&&
\end{tabular}}
\end{table}
By considering larger $N_x$ values (i.e., finer spatial resolution), the threshold where the break occurs can be pushed down and the numerical order gets closer to 1.25. Moreover, the oscillation for the weighted splittings in this case occurs also only for smaller $\tau$ values.

\section*{Acknowledgements}
\noindent
 The authors are indebted to Jussi Behrndt for suggesting Example \ref{examp:Lip} as an improvement to Example \ref{examp:beltrami} and providing the necessary references. 
The research was partially supported by the bilateral German-Hungarian Project \textit{CSITI -- Coupled Systems and Innovative Time Integrators}
financed by DAAD and Tempus Public Foundation. P.Cs.~acknowledges the Bolyai J\'anos Research Scholarship of the Hungarian Academy of Sciences and the support of the \'UNKP-19-4 New National Excellence Program of the Ministry of Human Capacities.  This article is based upon work from COST Action CA18232 MAT-DYN-NET, supported by COST (European Cooperation in Science and Technology).


\end{document}